\begin{document}

\title{Symmetric relative equilibria with one dominant and four infinitesimal point vortices}

\titlerunning{Symmetry in the $(1+4)$-vortex problem}        

\author{Alanna Hoyer-Leitzel     \and
        Sophie Phuong Le 
}


\institute{Alanna Hoyer-Leitzel \at
              Department of Mathematics and Statistics, Mount Holyoke College, 50 College Street, 
              South Hadley, MA 01075 \\
              Tel.: +1-716-499-5988\\
              \email{ahoyerle@mtholyoke.edu}           
           \and
           Sophie Phuong Le \at
           		Department of Mathematics and Statistics, Mount Holyoke College             
}

\date{Received: date / Accepted: date}

\maketitle

\begin{abstract}
We investigate the symmetry of point vortices with one dominant vortex and four vortices with infinitesimal circulations in the $(1+4)$-vortex problem, a subcase of the five-vortex problem. The four infinitesimal vortices inscribe quadrilaterals in the unit circle with the dominant vortex at the origin. We consider symmetric configurations which have one degree of spacial freedom, namely the $(1+N)$-gon, kites, rectangles, and trapezoids with three equal sides. We show there is only one possible rectangular configuration (up to rotation and ordering of the vortices) and one possible trapezoid with three equal sides (up to rotation and ordering), while there are parametrically defined families of kites.  Additionally we consider the $(1+4)$-gon and show that the infinitesimal vortices must have equal circulations on opposite corners of the square. The proofs are heavily dependent on techniques from algebraic geometry and require the use of a computer to calculate Gr\"{o}bner bases.\keywords{n-vortex problem \and relative equilibria \and symmetry \and Gr\"{o}bner basis}
\end{abstract}

\section{Introduction}
The Hamiltonian $n$-vortex problem is a historical problem, first posed by Kirchhoff \cite{kirchhoff1883vorlesungen}. However, the study of configurations of the vortices has modern applications, and we can find examples of vortex configurations similar to those given by relative equilibria in the $n$-vortex problem, both in nature and experimentally. For example, Hurricane Isabel in 2003 was documented to have six distinct mesovortices making up the eye of the hurricane: one in the center and five symmetrically located around the center \cite{kossin2004mesovortices}, much like the $(1+5)$-gon configuration in the $n$-vortex problem. Sheets of water created in low gravitational environments form vortices that also mimic rhombus and pentagon shaped relative equilibria \cite{nasa2012youtube}. In another experiment of fluids in a rotating tank designed to mimic vortices on giant plants, we see one large vortex surrounded by smaller vortices created by sheer instability, creating vortex configurations like those given by the $(1+N)$-vortex problem \cite{uclaspin2012youtube}. This paper examines possible symmetric vortex relative equilibria with one larger vortex surrounded by $N=4$ smaller vortices.

Relative equilibria of Hamiltonian $n$-point problems are periodic self-similar solutions  that rotate around their center of mass, vorticity, or the appropriate equivalent quantity. In a rotating coordinate system, these solutions correspond to an $n$-torus of degenerate fixed points in the phase space. Modern conceptions of the problem of relative equilibria with a dominant mass were first formulated by Hall \cite{hall1988central} and Moeckel \cite{moeckel1994linear}. Of particular note is the definition of relative equilibria of the $(1+N)$-body problem with one large and $N$ infinitesimal masses as a limit of relative equilibria with one large and $N$ small but positive masses. 

In this same vein, Barry et al \cite{barry2012relative} and Barry and Hoyer-Leitzel \cite{barry2016existence}, define relative equilibria of the $(1+N)$-vortex problem as the limiting case of relative equilibria with one large and $N$ small vortices of any circulation, as the small vortices become infinitesimal. These papers derive a sort of potential function $V(\theta)$ whose critical points correspond to positions of vortices in relative equilibria of the limiting problem, and whose Hessian gives the linear stability of relative equilibria when continued back from the limit. These results are summarized in Theorems \ref{thm:V} and \ref{thm:existence} in this paper. These theorems assume that in the limit, vortices are bounded away from each other. Loosening these assumptions would make for interesting future work.

The focus of this paper is to classify the symmetric configurations of relative equilibria of the $(1+N)$-vortex problem when $N=4$. This corresponds to an examination of the symmetry of critical points of $V(\theta)$. In the limit, the large vortex moves to the origin, and the infinitesimal vortices move to the unit circle. The symmetry of the configurations is dependent on the symmetry of points positioned around the unit circle, inscribing a convex quadrilateral within the unit circle. We consider symmetric quadrilaterals that have only one degree of freedom, after accounting for rotational symmetry around the unit circle. These quadrilaterals are squares (a $4$-gon), kites, rectangles, and trapezoids with three equal sides. Section \ref{sec:symmetry defs} defines configurations in detail. 

The $(1+4)$-vortex problem is a subcase of the five-vortex problem, though the categorization of the positions of four bodies or four vortices in relative equilibria is well developed and the types of symmetry are related to those used in this paper. In the $n$-body problem, relative equilibria fall within a larger group of configurations called central configurations, and for four bodies, these are classified as concave or convex, with the set of convex configurations further classified by different symmetric or asymmetric quadrilaterals \cite{albouy1996symmetric,albouy2008symmetry,corbera2018four,corbera2019classifying,deng2017four,rusu2016bifurcations,corbera2014central,santoprete2021uniqueness,santoprete2021uniqueness2,sun2023uniqueness}. 

Certain cases share the exact definitions or similar results to those in this paper. In \cite{cors2012four}, Cors and Roberts classify co-circular symmetric central configurations where all four bodies lie on the same circle. They find the only symmetric co-circular configurations are kites and isosceles trapezoids (a general case containing rectangles and trapezoids with three equal sides). Removing the assumption of co-circular, both Long and Sun in \cite{long2002four} and Perez-Chavela and Santoprete in \cite{perez2007convex} find that there are symmetric configurations with two equal pairs of masses on opposite vertices of a rhombus, similar to the result in Theorem \ref{thm:square} in this paper. In comparison, relative equilibria in the four vortex problem with two equal pairs of vortices are throughly classified in \cite{hampton2014relative} by Hampton, Roberts, and Santoprete. Again the positions of equal pairs in rhombus and kite configurations are similar to results in Theorem \ref{thm:square} and Theorem \ref{thm:kite}. 

Results for five-point problems are equally interesting. 
In \cite{hampton2009finiteness}, Hampton shows finiteness of the number of kite configurations in the five-body and and five-vortex problems, and in \cite{roberts1999continuum}, Roberts finds a continuum of central configurations in the five-body problem that continue across potential functions to the five-vortex problem. In \cite{lee2009central} Lee and Santoprete calculate all possible planar central configurations for five equal masses. For the five-vortex problem, Oliveira and Vidal \cite{oliveira2019stability} calculate the linear stability of the rhombus with the central vortex relative equilibrium, where the rhombus is made of two pairs of equal vortices at opposite corners. In contrast, Marchesin and Vidal examine the restricted five-vortex problem with two equal pairs of vortices in a rhombus and one infinitesimal vortex \cite{marchesin2019global}. This case is different than the one presented in this paper in that no limit is needed in defining the relative equilibria in the restricted problem.

Because, in the limiting case considered in this paper, the infinitesimal vortices lie on the unit circle with the dominant vortex at the origin, the $(1+N)$-vortex problem is a subset of vortex ring problems where vortices are arranged in a regular $N$-sided polygon with or without a vortex in the center.  
Cabral and Schmidt \cite{cabral2000stability} define and look at the $(N+1)$-vortex ring with $N$ equal vortices with circulation $\Gamma=1$ in a polygon and one central vortex with circulation $\kappa$. They find that the $(4+1)$-gon is stable when the central vortex has circulation $\kappa \in (-\frac12,\frac94)$. Ohsawa gives a sufficient condition on nonlinear stability of relative equilibria of the $N$-vortex problem and considers examples of the $(N+1)$-gon for $N=3$ and $N=4$ \cite{ohsawa2024nonlinear}. There is also a paper by Newton and Chamoun \cite{newton2009vortex} on vortex lattice theory where examples include the $(4+1)$-vortex ring with four equal vortices on the corners of the square.
On the other hand, Barry et al \cite{barry2012relative} consider the $(1+N)$-gon with $N$ infinitesimal vortices with the same circulation, which is linearly unstable regardless of the sign of the small vortices when $N\geq 4$. In Theorems \ref{thm:square} and \ref{square stability}, we prove the $(1+4)$-gon must have two equal pairs of vortices on opposite corners, and is always linearly unstable. 

There are rich results on the symmetry and number of relative equilibria in $(1+N)$-body and vortex problems with $N=3$ and $N=4$. Corbera et al \cite{corbera2011central} provide a condition on the infinitesimal masses in the $(1+3)$-body problem to get symmetric configurations. It is the same condition for the infinitesimal circulations given in \cite{barry2016existence} for the $(1+3)$-vortex problem. Bifurcations of symmetric relative equilibria in the $(1+3)$-body problem are described by Corbera et al \cite{corbera2015bifurcation} and Chen et al \cite{chen2023symmetric}, while rigorous counting of the number of relative equilibria is done by Tsai in \cite{tsai2016counting}. Tsai also rigorously counts the number of relative equilibria in the $(1+3)$-vortex problem in \cite{tsai2017numbers}, improving on results in \cite{hoyer2014bifurcations}.

Albouy and Fu \cite{albouy2009relative} classify all relative equilbria in the $(1+4)$-body problem with four identical infinitesimal masses as squares, kites
, and an isosceles trapezoid. Equivalent numerical results for the $(1+4)$-vortex problem are given in \cite{barry2012relative}. Oliveria \cite{oliveira2013symmetry}, Deng et al \cite{deng2019symmetric}, and Deng et al \cite{deng2022symmetry} extend these results for unequal infinitesimal masses and classify symmetry of configurations by whether the line of symmetry contains any of the infinitesimal masses. They described and find configurations that are similar to the kites, squares, and isosceles trapezoids described in this paper.

Also of note, in many of these papers (\cite{albouy2009relative,long2002four,perez2007convex,lee2009central,oliveira2019stability,marchesin2019global,cabral2000stability,barry2012relative,ohsawa2024nonlinear}), the circulations or masses of the point objects are assumed to be in a some fixed ratio such that the set of parameters can be given by a one-dimensional set. In this paper and in \cite{barry2016existence}, we give noteworthy examples of vortex configurations with a two-parameter family of circulations. 

The remainder of the introduction summarizes the necessary definitions and theorems about relative equilibria in the $(1+N)$-vortex problem, defines the different types of symmetric configurations, and gives a brief background of the techniques from algebraic geometry used to prove the results in this paper. The following sections consider the four types of symmetric configurations: the $(1+4)$-gon, rectangles, trapezoids with three equal sides, and kites. Assuming the type of symmetry given, the necessary ratios of the circulation parameters for the infinitesimal vortices are proved, as well as any restrictions on the positions of the vortices around the unit circle.

\subsection{Relative Equilibria of the ($1+N$)-Vortex Problem}
	The classical $n$-vortex problem is a point vortex differential equations model for $n$ well-separated vortices in a two-dimensional fluid. Let $q_i=(x_i,y_i)\in \mathbb{R}^2$ be the position of the $i$th vortex and let $\Gamma_i$ be the circulation of the $i$th vortex.. The equations of motion for the $n$ vortices are a Hamiltonian system with Hamiltonian $H(q)=-\sum\limits_{i<j} \Gamma_i\Gamma_j \log|q_i-q_j|$ so that 
	\begin{equation}
	\Gamma_i\dot{q}_i=J\nabla_i H(q) \quad \text{ with } \quad J=\begin{bmatrix} 0 & 1 \\ -1 & 0 \end{bmatrix} \label{vortexDE}
	\end{equation}
	and where $\nabla_i$ is the two-dimensional partial gradient with respect to $q_i$. 
	A relative equilibrium is a solution $q_i(t)=e^{-\omega J t}q_i(0), i=1,...,n$ to \eqref{vortexDE} which rotates around the center of vorticity at the origin with angular velocity $\omega$. We will assume $\omega=1$.

We consider the case of one dominant and $N$ smaller vortices. Let $\Gamma_0=1$ be the circulation of one strong vortex and let $\Gamma_i=\epsilon \mu_i$, $\mu_i\neq 0$ for $i=1,...,N$ be the circulations of $N$ smaller vortices. This is sometimes referred to as the $(N+1)$-vortex problem, and we use this convention here. Given a sequence of relative equilibria to the $(N+1)$-vortex problem, parameterized as $\epsilon \to 0$, the limiting case is called a relative equilibria of the $(1+N)$-vortex problem. We consider only relative equilibria of the $(1+N)$-vortex problem where the vortices are bounded away from each other and do not collide in the limit. 

Note that this is different than the restricted problem where the circulations of the infinitesimal vortices are set to zero and so that the infinitesimal vortices are passive under the influence of the strong vortex. By considering the limiting case, we preserve the potency of the interactions between the weaker vortices while taking the limit. For a detailed discussion of relative equilibria of the $(1+N)$-vortex problem and for proofs of the following lemma and theorems, see \cite{barry2012relative} and \cite{barry2016existence}.

\begin{lemma}[Lemma 2 in \cite{barry2016existence}]\label{config lemma}
In the limit as $\epsilon \to 0$, $|q_0|\to 0$ and $|q_i|=1$ for $i=1,...,N$. In other words, in the limit, the strong vortex is at the origin, and the infinitesimal vortices are on the unit circle. 
\end{lemma}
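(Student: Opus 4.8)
The plan is to work directly with the algebraic system characterizing a relative equilibrium and let $\epsilon\to 0$ inside it. Substituting the uniformly rotating ansatz $q_i(t)=e^{-Jt}q_i(0)$ into \eqref{vortexDE}, so that $\dot q_i=-Jq_i$ and the equations reduce to $\Gamma_iq_i=-\nabla_iH(q)$, and using $\nabla_iH(q)=-\Gamma_i\sum_{j\ne i}\Gamma_j\frac{q_i-q_j}{|q_i-q_j|^2}$, one divides by $\Gamma_i\ne 0$ (valid for every $\epsilon>0$ since $\mu_i\ne 0$) to obtain the relative-equilibrium equations, which hold at each stage of the limiting sequence,
\begin{equation*}
q_i=\sum_{j\ne i}\Gamma_j\,\frac{q_i-q_j}{|q_i-q_j|^2},\qquad i=0,1,\dots,N .
\end{equation*}
With $\Gamma_0=1$ and $\Gamma_i=\epsilon\mu_i$, the $i=0$ equation reads $q_0=\epsilon\sum_{j=1}^N\mu_j\frac{q_0-q_j}{|q_0-q_j|^2}$, while for $i\ge 1$, isolating the interaction with the dominant vortex gives $q_i=\frac{q_i-q_0}{|q_i-q_0|^2}+\epsilon\sum_{j\ge 1,\ j\ne i}\mu_j\frac{q_i-q_j}{|q_i-q_j|^2}$.

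The quantitative engine is the standing non-collision hypothesis: along the sequence of relative equilibria there is a $\delta>0$ with $|q_i-q_j|\ge\delta$ for all $i\ne j$, so every interaction term satisfies $\bigl|\tfrac{q_i-q_j}{|q_i-q_j|^2}\bigr|=|q_i-q_j|^{-1}\le\delta^{-1}$. Plugging this into the $i=0$ equation gives $|q_0|\le\epsilon\,\delta^{-1}\sum_{j}|\mu_j|$, which both bounds $q_0$ and forces $|q_0|\to 0$ as $\epsilon\to 0$; this is the first assertion. (Equivalently, $|q_0|\to 0$ is immediate from the normalization that the center of vorticity sits at the origin, $0=\sum_i\Gamma_iq_i=q_0+\epsilon\sum_{j\ge 1}\mu_jq_j$, once the $q_j$ are known to stay bounded — and the $i\ge 1$ equation above supplies that bound, $|q_i|\le\delta^{-1}+\epsilon\,\delta^{-1}\sum_j|\mu_j|$.)

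For the second assertion, the $\epsilon$-sum in the $i\ge 1$ equation is $O(\epsilon)$, so $q_i-\dfrac{q_i-q_0}{|q_i-q_0|^2}\to 0$. Taking the inner product with $q_i$ and using $(q_i-q_0)\cdot q_i=|q_i-q_0|^2+(q_i-q_0)\cdot q_0$, so that
\[
\frac{(q_i-q_0)\cdot q_i}{|q_i-q_0|^2}=1+\frac{(q_i-q_0)\cdot q_0}{|q_i-q_0|^2}=1+O(|q_0|)\to 1
\]
(legitimate since $|q_i-q_0|\ge\delta$ and $q_i$ stays bounded), one gets $|q_i|^2\to 1$, i.e.\ $|q_i|\to 1$. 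Equivalently, one may pass to a convergent subsequence $q^{(\epsilon)}\to q^\ast$ with pairwise distinct limit points; the limiting equations force $q_0^\ast=0$ and $q_i^\ast=q_i^\ast/|q_i^\ast|^2$, hence $|q_i^\ast|=1$, and since every subsequential limit is of this form the whole family is.

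I expect the only genuine obstacle to be the bookkeeping around the passage to the limit: one must guarantee that the family $\{q^{(\epsilon)}\}$ is precompact and stays uniformly bounded away from collisions, so that the interaction sums are honestly $O(\epsilon)$ rather than merely convergent — and this is exactly what the standing non-collision hypothesis provides, with upper bounds on the $|q_i|$ then falling out of the equations themselves. A minor secondary point is that the $i\ge 1$ equation is implicit in $q_i$ (the unknown appears on both sides), so rather than "solving" for $q_i$ one extracts $|q_i|\to 1$ by pairing with $q_i$ (or via the subsequence argument); everything else reduces to elementary estimates.
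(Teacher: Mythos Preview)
The paper does not supply its own proof of this lemma; it quotes the statement from \cite{barry2016existence} and explicitly defers to that reference (and to \cite{barry2012relative}) for the argument. So there is no in-paper proof to compare against, and the relevant question is simply whether your argument is sound.

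It is. Your derivation of the relative-equilibrium system $q_i=\sum_{j\ne i}\Gamma_j(q_i-q_j)/|q_i-q_j|^2$ is correct, and the paper's standing hypothesis---``we consider only relative equilibria \dots where the vortices are bounded away from each other and do not collide in the limit''---is exactly the uniform lower bound $|q_i-q_j|\ge\delta$ you invoke. With that, the $i=0$ equation gives $|q_0|\le\epsilon\delta^{-1}\sum_j|\mu_j|\to 0$ immediately; the $i\ge 1$ equation yields the a~priori bound $|q_i|\le\delta^{-1}(1+\epsilon\sum_j|\mu_j|)$ and then $q_i-\tfrac{q_i-q_0}{|q_i-q_0|^2}=O(\epsilon)$, from which pairing with $q_i$ (or the subsequence argument) gives $|q_i|^2\to 1$. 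This is the natural direct argument and is essentially what the cited reference does; nothing is missing.
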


\begin{theorem}[Theorem 1 in \cite{barry2016existence}] \label{thm:V}
Let $(\bar{r},\bar{\theta})=(1,...,1,\bar{\theta}_1,...\bar{\theta}_N)$ be the positions (in polar coordinates) of the $N$ small vortices in a a relative equilibrium of the $(1+N)$-vortex problem. Then $\bar{\theta}$ is a critical point of the function 
 \begin{equation}V(\theta)=-\sum_{i<j} \mu_i \mu_j [ \cos(\theta_i-\theta_j)+\tfrac12 \log(2-2\cos(\theta_i-\theta_j))] \label{Vequation}
 \end{equation}
\end{theorem}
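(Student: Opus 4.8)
The plan is to start from the defining equations of a relative equilibrium, extract the limiting behaviour of each coordinate as $\epsilon\to0$, and show that the tangential part of the equations for the infinitesimal vortices converges exactly to $\nabla V(\bar\theta)=0$. Substituting the self-similar solution $q_i(t)=e^{-Jt}q_i(0)$ into \eqref{vortexDE} with $\omega=1$ and using $J^2=-I$, a configuration is a relative equilibrium if and only if $\nabla_i H(q)+\Gamma_i q_i=0$ for all $i$ (equivalently, $q$ is a critical point of $H+\tfrac12\sum_j\Gamma_j|q_j|^2$); since $\Gamma_i\neq0$, dividing the $i$th equation by $\Gamma_i$ puts it in the fixed-point form $q_i=\sum_{j\neq i}\Gamma_j\,(q_i-q_j)/|q_i-q_j|^2$. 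Now take $\Gamma_0=1$, $\Gamma_i=\epsilon\mu_i$ and a sequence of relative equilibria with $\epsilon\to0$; by Lemma \ref{config lemma} the positions converge with $q_0\to0$ and $q_i\to\bar q_i=(\cos\bar\theta_i,\sin\bar\theta_i)$, and by hypothesis the limiting vortices are mutually distinct.

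The key step is a two-scale analysis of these fixed-point equations. The equation for the dominant vortex is $q_0=\epsilon\sum_{j\geq1}\mu_j\,(q_0-q_j)/|q_0-q_j|^2$, so $q_0=O(\epsilon)$ and, passing to the limit inside the now-convergent sum, $q_0/\epsilon\to-\sum_{j\geq1}\mu_j\bar q_j=:\bar p_0$. For an infinitesimal vortex $k$, the equation is $q_k=(q_k-q_0)/|q_k-q_0|^2+\epsilon\sum_{j\geq1,\,j\neq k}\mu_j\,(q_k-q_j)/|q_k-q_j|^2$; taking the component along $Jq_k$ and using $v\cdot Jv=0$ (skew-symmetry of $J$) to cancel the $q_k$ contributions leaves $q_0\cdot Jq_k/|q_k-q_0|^2+\epsilon\sum_{j\neq k}\mu_j\,(q_j\cdot Jq_k)/|q_k-q_j|^2=0$. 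Dividing by $\epsilon$, substituting the limit of $q_0/\epsilon$, and letting $\epsilon\to0$ — here $|q_k-q_0|^2\to1$ and the denominators $|q_k-q_j|^2\to2-2\cos(\bar\theta_k-\bar\theta_j)$ stay bounded away from zero because the limiting vortices do not collide — gives $\bar p_0\cdot J\bar q_k+\sum_{j\neq k}\mu_j\,(\bar q_j\cdot J\bar q_k)/(2-2\cos(\bar\theta_k-\bar\theta_j))=0$. The radial component of the $q_k$-equation is never used: it merely fixes the $O(\epsilon)$ correction to $|q_k|$ and imposes no condition on $\bar\theta$.

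It remains to recognize the limiting relation as $\partial V/\partial\theta_k=0$. Since $\bar q_j\cdot J\bar q_k=\sin(\bar\theta_k-\bar\theta_j)$ and the $j=k$ term vanishes, $\bar p_0\cdot J\bar q_k=-\sum_{j\neq k}\mu_j\sin(\bar\theta_k-\bar\theta_j)$, so the limiting equation reads $\sum_{j\neq k}\mu_j\,[\,\sin(\bar\theta_k-\bar\theta_j)/(2-2\cos(\bar\theta_k-\bar\theta_j))-\sin(\bar\theta_k-\bar\theta_j)\,]=0$. Differentiating \eqref{Vequation} and using that $\alpha\mapsto-\sin\alpha+\sin\alpha/(2-2\cos\alpha)$ is odd yields $\partial V/\partial\theta_k(\bar\theta)=\mu_k\sum_{j\neq k}\mu_j\,[\,\sin(\bar\theta_k-\bar\theta_j)-\sin(\bar\theta_k-\bar\theta_j)/(2-2\cos(\bar\theta_k-\bar\theta_j))\,]$, which is $-\mu_k$ times the left-hand side of the displayed relation. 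Since $\mu_k\neq0$, every $\partial V/\partial\theta_k(\bar\theta)$ vanishes, i.e. $\bar\theta$ is a critical point of $V$.

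The main obstacle is the analytic, rather than algebraic, content: one must know that along the chosen sequence the entire configuration — dominant vortex included — converges, that $q_0$ is genuinely of order $\epsilon$, and that no two infinitesimal vortices approach one another, so that the term-by-term passage to the limit is legitimate. These are exactly the facts supplied by Lemma \ref{config lemma} and the standing non-collision convention for relative equilibria of the $(1+N)$-vortex problem, proved in \cite{barry2012relative,barry2016existence}; granting them, the computation above is routine. An equivalent route, which makes the bookkeeping transparent, is to posit analytic expansions $q_0=\epsilon p_0+O(\epsilon^2)$ and $q_k=(1+\epsilon\rho_k)\bar q_k+O(\epsilon^2)$ and match powers of $\epsilon$ in $\nabla_i(H+\tfrac12\sum_j\Gamma_j|q_j|^2)=0$: the $O(1)$ part of the infinitesimal-vortex equations is satisfied automatically, the radial $O(\epsilon)$ part determines $\rho_k$, and the tangential $O(\epsilon)$ part is precisely $\nabla V(\bar\theta)=0$.
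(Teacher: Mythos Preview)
The paper does not prove this theorem at all: it is quoted verbatim as ``Theorem 1 in \cite{barry2016existence}'' and the surrounding text explicitly directs the reader to \cite{barry2012relative,barry2016existence} for the proofs of Lemma~\ref{config lemma} and Theorems~\ref{thm:V}--\ref{stability theorem}. So there is no in-paper argument to compare against.

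That said, your derivation is correct and is essentially the standard one from those references. You correctly obtain the fixed-point form $q_i=\sum_{j\neq i}\Gamma_j(q_i-q_j)/|q_i-q_j|^2$ from the rotating-frame condition, identify $q_0/\epsilon\to-\sum_{j\ge1}\mu_j\bar q_j$ from the $i=0$ equation, project the $i=k$ equation onto $Jq_k$ to isolate the tangential component, and pass to the limit using Lemma~\ref{config lemma} and the non-collision hypothesis. The identification $\bar q_j\cdot J\bar q_k=\sin(\bar\theta_k-\bar\theta_j)$ and the computation of $\partial V/\partial\theta_k$ are right, and the alternative route via formal $\epsilon$-expansions you sketch at the end is exactly how \cite{barry2012relative,barry2016existence} organise the argument. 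There is no gap.
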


The function $V$ has a few important symmetries. Any rotation of a critical point of $V$ is also a critical point of $V$. $V$ is an even function so if $\bar{\theta}$ is a critical point of $V$, then so is $-\bar{\theta}$, corresponding to reflection over the $x$-axis in the position of vortices around the unit circle. Additionally, if $\bar{\theta}$ is a critical point of $V$ for the parameter set $(\mu_1,\mu_2,\mu_3,\mu_4)$, then $\bar{\theta}$ is also a critical point for any scalar multiple of $(\mu_1,\mu_2,\mu_3,\mu_4)$. 

Because of rotational symmetry, all critical points of $V$ are degenerate, i.e. the Hessian $V_{\theta \theta}$ at the critical point has one zero eigenvalue, corresponding to the eigenvector $v=\begin{bmatrix}1 & 1 & 1 & \dots 1\end{bmatrix}^T$. However, we can partition the nullspace of $V_{\theta \theta}$ into the span of $v$ and its complement, and define nondegeneracy on the complement.
\begin{definition} \label{def:nondegen}
A critical point $\bar{\theta}$ of $V$ is \textit{nondegenerate} provided the Hessian $V_{\theta \theta}(\bar{\theta})$ has only one zero eigenvalue.
\end{definition} 

\begin{theorem} [Theorem 2 in \cite{barry2016existence}] \label{thm:existence}
Suppose $\bar{\theta}=(\bar{\theta}_1,...,\bar{\theta}_N)$ is a nondegenerate critical point of $V$.  Then for $\bar{r}=(1,1,...,1)$, the configuration $(\bar{r},\bar{\theta})$ are the positions of the $N$ infinitesimal vortices in a relative equilibrium of the $(1+N)$-vortex problem.
\end{theorem}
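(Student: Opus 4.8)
This statement is the converse of Theorem~\ref{thm:V}: it asserts that a nondegenerate critical point of $V$ is actually realized as an $\epsilon\to 0$ limit of relative equilibria, so the natural tool is a continuation (implicit function theorem) argument that ``unfolds'' the limit. First I would record the relative equilibrium equations of the $(N+1)$-vortex problem in the rotating frame: from $q_i(t)=e^{-Jt}q_i(0)$ and \eqref{vortexDE} one obtains the algebraic system $\nabla_i H(q)=-\Gamma_i q_i$, that is
\[
\sum_{j\ne i}\Gamma_j\,\frac{q_i-q_j}{|q_i-q_j|^{2}}=q_i,\qquad i=0,1,\dots,N,
\]
with $\Gamma_0=1$ and $\Gamma_i=\epsilon\mu_i$ for $i\ge 1$; weighting these equations by $\Gamma_i$ and summing forces $\sum_i\Gamma_i q_i=0$, so translational symmetry is already eliminated. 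Call this system $F(q,\epsilon)=0$. Setting $\epsilon=0$ forces only $q_0=0$ and $|q_i|=1$ and leaves the angles $\theta_1,\dots,\theta_N$ entirely free; thus $F$ is \emph{singular} at $\epsilon=0$ and a naive implicit function argument fails, the angles being pinned only at the next order in $\epsilon$ --- which is precisely the content of Theorem~\ref{thm:V}, now to be read ``in reverse.''

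To desingularize I would rescale. Write $q_0=\epsilon\,\xi$ and $q_i=r_i\,u(\theta_i)$ with $u(\theta)=(\cos\theta,\sin\theta)$, and split the $i$th equation ($i\ge 1$) into its radial component (dotted with $u(\theta_i)$) and its tangential component (dotted with $u'(\theta_i)$). The $q_0$-equation, divided by $\epsilon$, becomes a regular equation $G^{(0)}(\xi,r,\theta,\epsilon)=0$ whose $\epsilon=0$ form is $\xi=-\sum_j\mu_j u(\theta_j)$. The radial components, after cancelling a common factor, give equations $G^{(r)}_i=0$ whose $\epsilon=0$ form is $1/r_i-r_i=0$, hence $r_i=1$. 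The tangential components, each divided by $\epsilon$, give equations $G^{(\theta)}_i=0$; a direct expansion (using $|u_i-u_j|^{2}=2-2\cos(\theta_i-\theta_j)$ and $u_j\cdot u'(\theta_i)=\sin(\theta_j-\theta_i)$) shows that \emph{on} the set where $\xi=-\sum_j\mu_j u(\theta_j)$ and $r_i=1$ the function $G^{(\theta)}_i$ reduces, up to the nonzero factor $1/\mu_i$, to $\partial V/\partial\theta_i$. Thus the rescaled system $G=(G^{(0)},G^{(r)},G^{(\theta)})$ is analytic near $\epsilon=0$ (the denominators being bounded away from zero because a critical point of $V$ has distinct angles), and $\bigl(\bar\theta,\ \bar r=(1,\dots,1),\ \bar\xi=-\sum_j\mu_j u(\bar\theta_j),\ 0\bigr)$ is a solution of it.

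Now I would apply the implicit function theorem in the order ``$(\xi,r)$ first, then $\theta$.'' At $\epsilon=0$ the block $\partial_{(\xi,r)}\bigl(G^{(0)},G^{(r)}\bigr)$ is block upper-triangular with diagonal blocks $I$ and $-2I$, hence invertible, so $G^{(0)}=G^{(r)}=0$ can be solved for $\xi=\xi(\theta,\epsilon)$ and $r=r(\theta,\epsilon)$ near the reference point. Substituting into $G^{(\theta)}=0$ yields a reduced system $\widetilde G(\theta,\epsilon)=0$ with $\widetilde G(\theta,0)=-\mathrm{diag}(1/\mu_i)\,\nabla V(\theta)$, so $\partial_\theta\widetilde G(\bar\theta,0)=-\mathrm{diag}(1/\mu_i)\,V_{\theta\theta}(\bar\theta)$. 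By the definition of \emph{nondegenerate} given just before the theorem, $V_{\theta\theta}(\bar\theta)$ has exactly one zero eigenvalue, produced by the rotational symmetry of $V$ (with eigenvector $(1,\dots,1)$), and this symmetry also makes one of the equations $\widetilde G_1=\dots=\widetilde G_N=0$ redundant. I would remove both by restricting to the slice $\theta_1=\bar\theta_1$ (legitimate, since every rotate of a relative equilibrium is again one): on it, $\partial_\theta\widetilde G$ is a square invertible matrix. The implicit function theorem then produces, for $|\epsilon|$ small, a smooth branch $(\theta(\epsilon),r(\epsilon),\xi(\epsilon))$ of solutions of $G=0$ with $(\theta(0),r(0),\xi(0))=(\bar\theta,(1,\dots,1),\bar\xi)$; undoing the rescaling gives a one-parameter family of genuine relative equilibria of the $(N+1)$-vortex problem with $q_0(\epsilon)\to 0$ and $q_i(\epsilon)\to u(\bar\theta_i)$ as $\epsilon\to 0$. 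By the definition of a relative equilibrium of the $(1+N)$-vortex problem as the $\epsilon\to 0$ limit of such a family, $(\bar r,\bar\theta)$ with $\bar r=(1,\dots,1)$ is one, as claimed.

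The main obstacle is the desingularization step: choosing the scaling exponents so that every rescaled equation has a nontrivial, regular $\epsilon=0$ limit, and then verifying --- through careful order-by-order expansion of $1/|q_i-q_j|^{2}$ and $1/|q_i-q_0|^{2}$ --- that the leading tangential equations are \emph{exactly} $\nabla V=0$ and that the reduced Jacobian is \emph{exactly} $V_{\theta\theta}(\bar\theta)$ (up to the harmless factors $1/\mu_i$), rather than $V_{\theta\theta}$ plus an unwanted correction coming from the $\xi$- and $r$-dependence of $G^{(\theta)}$; establishing the reduction identity $\widetilde G(\theta,0)=-\mathrm{diag}(1/\mu_i)\,\nabla V(\theta)$ is what makes the Jacobian computation and the transversality-to-rotations step routine afterwards. (Alternatively, one may invoke the analogous dominant-mass continuation machinery of Hall~\cite{hall1988central} and Moeckel~\cite{moeckel1994linear} adapted to the vortex setting, of which the argument above is the direct translation.)
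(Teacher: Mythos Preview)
The present paper does not prove this theorem; it is quoted verbatim as ``Theorem~2 in \cite{barry2016existence}'' and the reader is explicitly referred to \cite{barry2012relative,barry2016existence} for the proof. Your proposal is nonetheless correct and is essentially the argument given in \cite{barry2016existence}: rescale $q_0=\epsilon\xi$, pass to polar coordinates for the small vortices, observe that the $\epsilon=0$ limit of the $q_0$-equation and the radial equations determines $\xi$ and $r_i=1$ with nonsingular linearization, and that the leading-order tangential equations are precisely $\mu_i^{-1}\partial_{\theta_i}V=0$; then remove the rotational degeneracy by fixing one angle and apply the implicit function theorem, using the nondegeneracy hypothesis to invert the reduced Jacobian $\mu^{-1}V_{\theta\theta}(\bar\theta)$ on the transverse slice. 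This is exactly the vortex translation of the Hall--Moeckel continuation scheme you cite at the end, and there is no gap in your outline; the only real work, as you say, is the bookkeeping that shows the order-$\epsilon$ tangential system coincides with $\nabla V$ after substituting $\xi=-\sum_j\mu_j u(\theta_j)$ and $r_i=1$, which you have identified correctly.
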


Since $V_{\theta \theta}(\theta,k\mu_1,k\mu_2,k\mu_3,k\mu_4) = k^2V_{\theta \theta}(\theta,\mu_1,\mu_2,\mu_3,\mu_4)$, scaling of the circulation parameters preserves the degeneracy or nondegeneracy of a critical point.

 The function $V$ also works as a sort of potential function for relative equilibria of the $(1+N)$-vortex problem, in that the eigenvalues of a \textit{weighted} Hessian correspond to the linear stability of relative equilibria in the full, not limiting, $(N+1)$-vortex problem. Let $\mu$ be the diagonal matrix with the circulation parameters $\mu_1,\mu_2,...,\mu_N$ on the diagonal. The stability criteria are given in the next theorem.

\begin{theorem}[Theorem 3 in \cite{barry2016existence}] \label{stability theorem} 
Let $(r^{\varepsilon}, \theta^{\varepsilon})$ be a sequence of relative equilibria of the $(N+1)$-vortex problem that converges to a relative equilibrium $(\bar{r}, \bar{\theta})=(1,...,1,\bar{\theta}_1,...,\bar{\theta}_N)$ of the $(1+N)$-vortex problem as $\varepsilon \to 0$, and let $\bar{\theta}$ be a nondegenerate critical point of $V$.
For $\varepsilon$ sufficiently small, $(r^{\varepsilon},\theta^{\varepsilon})$ is nondegenerate and is linearly stable if and only if $\mu^{-1}V_{\theta\theta}(\bar{\theta})$ has $N-1$ positive eigenvalues.
\end{theorem}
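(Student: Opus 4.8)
\medskip
\noindent\emph{Proof proposal.} The plan is to pass to a uniformly rotating frame, in which each relative equilibrium $(r^{\varepsilon},\theta^{\varepsilon})$ of the $(N+1)$-vortex problem becomes an honest equilibrium of a Hamiltonian vector field, and then to study the $\varepsilon\to 0$ asymptotics of the linearization there. Concretely, I would rewrite \eqref{vortexDE} for the $N+1$ vortices with $\Gamma_0=1$, $\Gamma_i=\varepsilon\mu_i$ in polar coordinates $(\rho_j,\phi_j)$ for the small vortices and Cartesian coordinates for the dominant vortex, subtract the $\omega=1$ rotation term, and use Lemma \ref{config lemma} to record that the equilibria converge to $q_0=0$, $(\bar r,\bar\theta)=(1,\dots,1,\bar\theta_1,\dots,\bar\theta_N)$. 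The linearization $L_{\varepsilon}$ at $(r^{\varepsilon},\theta^{\varepsilon})$ is a matrix whose entries are smooth in $\varepsilon$ and in the (smoothly $\varepsilon$-dependent) equilibrium data; ``linearly stable'' means that $L_\varepsilon$ is spectrally stable and semisimple on the phase space reduced by the rotational symmetry and its conserved vorticity-weighted moment.

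The core of the argument is a singular-perturbation (blow-up) analysis of $L_\varepsilon$. The point is that the small vortices interact with the dominant vortex at order $1$ but with each other only at order $\varepsilon$, so the displacement of $q_0$ and the radial displacements $\delta\rho_j$ are ``fast'' while the angular displacements $\delta\phi_j$ are ``slow''. Rescaling the perturbation components by their natural units and rescaling time by $\varepsilon$ on the angular block, $L_\varepsilon$ limits to a block-triangular normal form with a fast block and a slow block. I would identify the fast block with the linearization of the restricted problem of $N$ infinitesimal, mutually non-interacting vortices orbiting a fixed unit vortex on circles of the equilibrium frequency; this block is explicitly diagonalizable with purely imaginary spectrum, so it never obstructs stability. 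The slow block, after reduction, is shown to be similar to $\mu^{-1}V_{\theta\theta}(\bar\theta)$ acting on the angular variables, where the circulation weighting $\mu$ comes from the vortex symplectic form carrying the factors $\varepsilon\mu_j$ and the presence of only one (angular) coordinate per slow degree of freedom reflects the first-order nature of vortex dynamics. A direct computation with this reduced structure shows that the limiting slow flow is bounded exactly when every one of its eigenvalues is real and positive, i.e. when $\mu^{-1}V_{\theta\theta}(\bar\theta)$ has $N-1$ positive eigenvalues --- the nondegeneracy hypothesis on $\bar\theta$ being precisely what forces $\mu^{-1}V_{\theta\theta}(\bar\theta)$ to have a single zero eigenvalue (removed by the rotational reduction) with the remaining $N-1$ eigenvalues bounded away from $0$.

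Finally I would run the persistence argument. Because the eigenvalues of $L_\varepsilon$ vary continuously with $\varepsilon$ and, in the blown-up picture, cluster near the union of the fast spectrum and $\varepsilon$ times the slow spectrum, for all sufficiently small $\varepsilon>0$ the spectrum of $L_\varepsilon$ consists of the forced zero eigenvalue (with the correct multiplicity, guaranteed by nondegeneracy of $\bar\theta$), a group of $O(1)$ purely imaginary eigenvalues from the fast block, and a group of $O(\varepsilon)$ eigenvalues whose leading behaviour is governed by $\mu^{-1}V_{\theta\theta}(\bar\theta)$. Hence $(r^{\varepsilon},\theta^{\varepsilon})$ is nondegenerate and linearly stable for small $\varepsilon$ if and only if $\mu^{-1}V_{\theta\theta}(\bar\theta)$ has $N-1$ positive eigenvalues, which is the claim; this parallels the analogous $(1+N)$-body statements of Hall \cite{hall1988central} and Moeckel \cite{moeckel1994linear}. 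The main obstacle is the rigor of the blow-up: the fast block has its eigenvalues on the imaginary axis, the borderline case for perturbation theory, so one must exploit the Hamiltonian symmetry of $L_\varepsilon$ (eigenvalues occurring in quadruples $\pm\lambda,\pm\bar\lambda$) together with a Krein-signature / definiteness argument to guarantee that turning on the $O(\varepsilon)$ coupling neither pushes the fast eigenvalues off the imaginary axis nor creates nontrivial Jordan blocks, so that stability is genuinely controlled by the slow block alone.
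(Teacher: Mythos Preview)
The paper does not contain a proof of this statement. Theorem~\ref{stability theorem} is quoted verbatim as ``Theorem 3 in \cite{barry2016existence}'' and the paper explicitly refers the reader to \cite{barry2012relative} and \cite{barry2016existence} for proofs of the background results (Lemma~\ref{config lemma} and Theorems~\ref{thm:V}--\ref{stability theorem}). There is therefore no proof in this paper against which to compare your proposal.

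That said, your outline is broadly in the spirit of how such results are obtained in the cited references and in the $(1+N)$-body analogues of Hall and Moeckel that you mention: linearize in a rotating frame, separate fast radial/central dynamics from slow angular dynamics via an $\varepsilon$-scaling, identify the slow block with $\mu^{-1}V_{\theta\theta}$, and invoke continuity of spectra. One point that would need more care in a full write-up is your claim that the slow flow is ``bounded exactly when every one of its eigenvalues is real and positive'': since the $\mu_i$ may have either sign, $\mu^{-1}V_{\theta\theta}$ is not in general symmetric, and the link between positivity of its eigenvalues and linear stability of the full system runs through the specific Hamiltonian/symplectic structure of the vortex equations rather than through a generic boundedness statement. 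Your closing paragraph acknowledges this is the delicate step; in \cite{barry2016existence} the corresponding argument is carried out directly on the $\varepsilon$-expansion of the characteristic polynomial of the linearization rather than through an abstract Krein-signature machinery.
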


Since $\mu^{-1}V_{\theta\theta}(\theta,k\mu_1,k\mu_2,k\mu_3,k\mu_4) = k\mu^{-1}V_{\theta\theta}(\theta,\mu_1,\mu_2,\mu_3,\mu_4)$, a relative equilibrium that is linearly stable for the parameter set $(\mu_1,\mu_2,\mu_3,\mu_4)$ will also be linearly stable for parameter sets $k(\mu_1,\mu_2,\mu_3,\mu_4)$ where $k>0$. Additionally, a nondegenerate critical point of $V$ that has $N-1$ \textit{negative} eigenvalues for the parameter set $(\mu_1,\mu_2,\mu_3,\mu_4)$ will continue to a to linearly stable relative equilibrium for $k(\mu_1,\mu_2,\mu_3,\mu_4)$ when $k<0$. 

\subsection{Symmetry when $N=4$} \label{sec:symmetry defs}

\begin{figure}[h]
\centering
\begin{subfigure}[t]{.23\textwidth}
\includegraphics[width=1.4in]{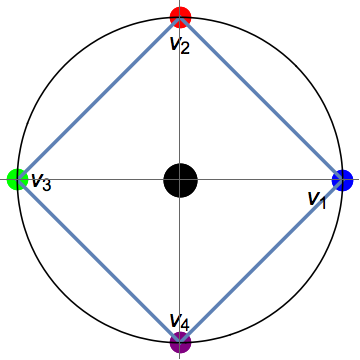}
\caption{The $(1+4)$-gon}
\label{squaredef}
\end{subfigure}
\begin{subfigure}[t]{.23\textwidth}
\includegraphics[width=1.4in]{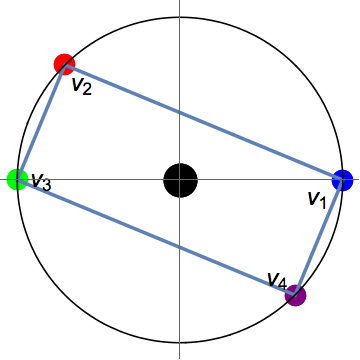}
\caption{A rectangle}
\label{rectangledef}
\end{subfigure} 
\vspace{.2in}
\begin{subfigure}[t]{.23\textwidth}
\includegraphics[width=1.4in]{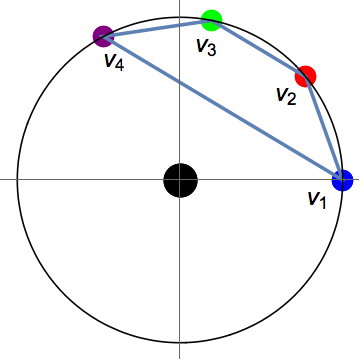}
\caption{A trapezoid with three equal sides}
\label{threeequaldef}
\end{subfigure} 
\begin{subfigure}[t]{.23\textwidth}
\includegraphics[width=1.4in]{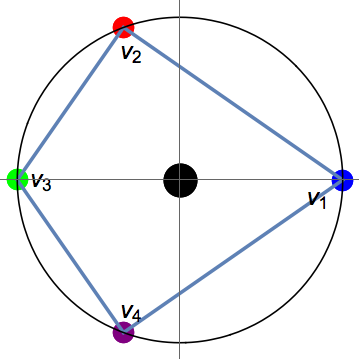}
\caption{A Kite }
\label{kitedef}
\end{subfigure}  
\caption{Examples of symmetric relative equilibria in the $(1+4)$-vortex problem}
\label{definition pictures}
\end{figure}

We consider the case of relative equilibria of the $(1+N)$-vortex problem when $N=4$. As stated in Lemma \ref{config lemma}, the dominant vortex is at the origin, and the four infinitesimal vortices are positioned around the unit circle. The five vortices together may form a convex or concave configuration, and the overall configuration will be convex only when all four infinitesimal vortices lie on one half of the circle with the dominant vortex at the origin. Instead, we consider convex quadrilaterals inscribed in the unit circle by the four infinitesimal vortices, with the vertices of the quadrilateral as the positions of the vortices.

Symmetry is defined by reflective symmetry of the quadrilateral. There are two cases depending on whether the line of symmetry contains any vortices. A kite has two vortices on an axis of symmetry while the other two vortices are symmetric by reflection over this axis.  All other symmetric configurations inscribe isosceles trapezoids and have a line of reflective symmetry containing no vortices. We only consider subcases of isosceles trapezoids with one degree of freedom: rectangles and trapezoids with three equal sides. 

At the intersection of the definitions of rectangles, trapezoids with three equal sides, and kites is a square. A regular polygon inscribed in a circle with central vortex is called the $(1+N)$-gon configuration. When $N=4$, this is, of course, a square.

Let $v_1,v_2,v_3,v_4$ be the four infinitesimal vortices. The position of the vortex is given by the angle around the unit circle, so when the value of $\theta_i$ is defined, this means that $v_i$ is at the point $(1,\theta_i)$ in polar coordinates.

In the proofs that follow, we set $\theta_1=0$ to reduce the rotational symmetry of the critical points of $V$. For the $(1+4)$-gon, assuming the ordering of the vortices around the unit circle counterclockwise is $v_1<v_2<v_3<v_4$, we can assume that $\theta_2=\pi/2$, $\theta_3=\pi$, $\theta_4=3\pi/2$.

The other configurations are defined with one degree of freedom.  For rectangles, pairs of vortices lie on parallel lines, so with $\theta_1=0$, we fix $\theta_3=\pi$, with $\theta_2$ free and $\theta_4=\theta_2+\pi$. For trapezoids with three equal sides, we let $\theta_2$ be free with $\theta_3=2\theta_2$, and $\theta_4=3\theta_2$. For kites, we assume the line of reflection is over the $x$-axis and fix $\theta_3=\pi$. Then $\theta_2$ is free and $\theta_4=-\theta_2$.

Figure \ref{definition pictures} shows the four types of symmetric configurations considered in this paper.

\subsection{Gr\"{o}bner Bases and Elimination Ideals}

We give a brief overview of the techniques from algebraic geometry used in this paper. A \textit{What is} paper by Sturmfels \cite{sturmfels2005grobner} gives a brief and insightful introduction to Gr\"{o}bner bases. For a deeper look, see the book \textit{Using Algebraic Geometry} by Cox, Little, and O'Shea \cite{cox2006using}.

At its simplest, a Gr\"{o}bner basis is a technique for solving a system of polynomial equations. The algorithm involved finds another set of polynomials with the same set of zeros as the original equations. The advantages of a Gr\"{o}bner basis come in the monomial ordering in the algorithm, naturally ordering the new polynomials from simplest (fewest variables and lowest exponents) to more complicated. 

Let $k$ be a field, and let $\mathcal{P}=\{p_1,...,p_i\}$ be a set of polynomials in the polynomial ring $k[x_1,...,x_n]$. Then $\mathcal{P}$ generates an ideal
\begin{equation*}
\langle \mathcal{P} \rangle = \{h_1p_1+...+h_ip_i \text{ where } p_1,...,p_i \in \mathcal{P} \text{ and } h_1,...,h_i \in k[x_1,...,x_n] \}
\end{equation*}

The set of zeros of $\mathcal{P}$ is the \textit{variety} of $\mathcal{P}$ and denoted $Var(\mathcal{P})$. The variety of a set of polynomials and the variety of the ideal it generates are the same, $Var(\mathcal{P})=Var(\langle \mathcal{P} \rangle)$. The set $\mathcal{P}$ is referred to as a basis for the ideal it generates. A Gr\"{o}bner basis $\mathcal{G}$ of $\langle \mathcal{P} \rangle$ is another basis with specific properties for the same ideal, so that $Var(\mathcal{G})=Var(\mathcal{P})$. 

A Gr\"{o}bner basis is calculated algorithmically. In this paper, Gr\"{o}bner bases are implemented in Mathematica 12 using the default Gr\"{o}bner basis algorithm (a Gr\"{o}bner walk rather than the historical Buchberger's Algorithm). However the calculation is dependent on the choice of ordering of the variables. The ordering must be a total well-ordering that preserves multiplication on the variables. In this paper, we use two monomial orderings in Mathematica, the default \texttt{Lexicographic} ordering or the \texttt{DegreeReverseLexicographic} ordering, which is equivalent to the graded reverse lexicographic ordering (\textit{grevlex}) as defined in \cite{cox2006using}. The ordering used in each Gr\"{o}bner basis calculation is specified throughout the paper. 

Additionally, the Gr\"{o}bner basis for an elimination ideal can be used to eliminate variables completely. For an ideal $I \subset k[x_1,...,x_k,x_{k+1},...,x_n]$, an \textit{elimination ideal} is $I_k=I \cap k[x_{k+1},...,x_n]$. This eliminates the first $k$ variables from the ideal. Additionally, if $\mathcal{G}$ is the Gr\"{o}bner basis for $I$, then the Gr\"{o}bner basis for the elimination ideal $I_k$ is $G\cap I_k$. Geometrically, this is equivalent to projecting the variety $Var(I)$ onto $x_{k+1}\cdots x_n$-space. 

\begin{example}\label{GBelimination example}
To illustrate the above ideas, we give an overly simple example. Consider the set of polynomials
\begin{equation*}
\mathcal{P} = \{ x-y-z+2,x^2+y^2-z \} 
\end{equation*}
The sets of roots of these polynomials are a plane and a paraboloid in $\mathbb{R}^3$, respectively. The variety of $\mathcal{P}$ is the intersection of these two surfaces. In Mathematica, the command
\begin{center}
\texttt{GroebnerBasis[P,\{x,y,z\}]}
\end{center}
 gives a Gr\"{o}bner basis $\{4 - 4 y + 2 y^2 - 5 z + 2 y z + z^2, 2 + x - y - z\}$ in the default lexicographic ordering, while
 \begin{center} \texttt{GroebnerBasis[P,\{x,y,z\},MonomialOrder$\to$DegreeReverseLexicographic]}\end{center}
  gives a Gr\"{o}bner basis $\{2 + x - y - z, 4 - 4 y + 2 y^2 - 5 z + 2 y z + z^2\}$ in \textit{grevlex} ordering .
Finally, we can find the projection of the variety onto the $xy$-plane by computing the Gr\"{o}bner basis of the elimination ideal using a \textit{grevlex} or Degree Reverse Lexicographic ordering. In Mathematica, the command is
\begin{center}
\texttt{GroebnerBasis[P,\{x,y\},z,MonomialOrder$\to$EliminationOrder]}\end{center}
which results in the Gr\"{o}bner basis $\{-2 - x + x^2 + y + y^2\}$.

The surfaces described by the polynomials of $\mathcal{P}$, the variety $Var(\mathcal{P})$, and the projection of $Var(\mathcal{P})$ are shown in Figure \ref{GBex}.

\end{example}

Often we will want to eliminate trivial or unwanted subsets of a variety. See Example \ref{GBsaturation example} where we remove a plane from the variety in order to explicitly identify the rest of the structure.
One common trick is to \textit{saturate the ideal} before a Gr\"{o}bner basis calculation. Assume the roots of a polynomial $g$ are a subset of $Var(\mathcal{P})$, but we would like find a basis for the ideal with variety $Var(\mathcal{P})\setminus \{g=0\}$. We add the polynomial $wg+1$ to the set $\mathcal{P}$ and calculate the Gr\"{o}bner Basis while eliminating the variable $w$. This eliminates the roots of $g$ from the variety of the saturated ideal. 

\begin{example}\label{GBsaturation example}
Consider the set of polynomials
\begin{equation*}
\mathcal{Q} = \{ x^2-xy-xz+2x,x^3+xy^2-xz \} 
\end{equation*}
This is almost the same set of polynomials as in Example \ref{GBelimination example}, but now both polynomials have been multiplied by $x$, so that the variety of $\mathcal{Q}$ consists of the ellipse $Var(\mathcal{P})$ and the plane $x=0$. We can remove the trivial root $x=0$ by saturating the ideal with the polynomial $wx+1$. Define the set $\mathcal{Q'}=\{ x^2-xy-xz+2x,x^3+xy^2-xz, wx+1\}$ and compute the Gr\"{o}bner basis for the elimination ideal with monomial ordering $w>x>y>z$ and eliminate the variable $w$. In Mathematica, the command is
 \begin{center} \texttt{GroebnerBasis[Qprime,\{x,y,z\},w,MonomialOrder$\to$EliminationOrder]}\end{center}
which results in the Gr\"{o}bner basis $\{2 + x - y - z, 4 - 4 y + 2 y^2 - 5 z + 2 y z + z^2\}$, which is the Gr\"{o}bner basis for $\mathcal{P}$ in $grevlex$ ordering. 
\end{example}

\subsection{Sturm's Algorithm for counting roots of a polynomial}

Theorems \ref{thm:3equalsides} and \ref{thm:kite configs} depends on Sturm's Algorithm for counting roots of a polynomial in one variable. Let $p(x)$ be a degree $n$ polynomial. Construct the Sturm sequence 
\begin{align}
p_0(x)&=p(x)\\
p_1(x)&=p'(x)\\
p_{i}(x)&=-\text{Rem}(p_{i-2},p_{i-1})
\end{align}
The sequence is at most length $n$. We evaluate this sequence at some value of $x$ and record the number of sign changes in the sequence as $v(x)$. When $x\to +\infty$ the sign of the polynomial is determined by the sign of the leading coefficient. When $x\to-\infty$, the sign of the polynomial is the sign of the leading coefficient if the polynomial has even degree and is opposite of the sign of the leading coefficient if the polynomial had odd degree.
\begin{theorem}[Sturm]
The total number of distinct real roots of $p(x)$ in the interval $(a,b]$ is $v(a)-v(b)$.
\end{theorem} 

In Theorem \ref{thm:kite configs}, the polynomial in question depends on three parameters. Sturm's theorem is applicable for polynomials with parameters, though it is applied through a series of branches determined by whether the leading coefficient of each new polynomial in the sequence is zero or nonzero. For more details, see sections 1.3.2 and 1.3.3 in \cite{Coste}.


%

\begin{figure}[h]
\centering
\includegraphics[width=.35\textwidth]{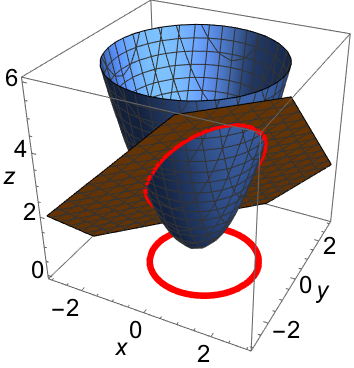}
\caption{Pictured are the surfaces in $\mathcal{P}$ for Example \ref{GBelimination example}, the variety $Var(\mathcal{P})$ and its projection onto the $xy$-plane, i.e. the variety of the elimination ideal which eliminates the variable $z$.}
\label{GBex}
\end{figure}

	\subsection{Outline of Proofs}\label{process}	
In each of the following sections, a certain type of symmetry is assumed and we derive conditions on the circulation parameters necessary to get that symmetry. Each proof uses Mathematica for algebraic simplification, converting the gradient $\nabla V(\theta)$ into an equivalent system of polynomials and then computing a Gr\"{o}bner basis.\footnote{The authors can share the Mathematica notebooks for the work in this paper upon request.}  The process in Mathematica is the following
\begin{enumerate}
\item We start by calculating $\nabla V$ and dividing each function by the common factor $\mu_i$.
	\begin{align}
	\mu^{-1}\nabla V &= (V_{\theta_1},V_{\theta_2},V_{\theta_3},V_{\theta_4})\\
	V_{\theta_i}&= \sum_{j\neq i} \mu_j \left(-\sin(\theta_i - \theta_j) + \frac{\sin(\theta_i - \theta_j)}{2 - 2 \cos(\theta_i- \theta_j)}\right)
	\end{align}
Note 1: We will be using a slight abuse of notation where $V_{\theta_i}=\dfrac{1}{\mu_i} \dfrac{\partial V}{\partial \theta_i}$.

Note 2: The equations of $\mu^{-1}\nabla V$ are linearly dependent: $\mu_1V_{\theta_1}+\mu_2V_{\theta_2}+\mu_3V_{\theta_3}+\mu_4V_{\theta_4}=0.$ 
Thus we need only find solutions to the system 
\begin{equation} V_{\theta_2}=0, \quad V_{\theta_3}=0, \quad V_{\theta_4}=0 \end{equation}

\item Next, the symmetry assumptions are substituted into the equations $V_{\theta_i}$. We set $\theta_1=0$ to reduce by rotational symmetry in $V$. In each case, the only angular variable left in the equations is $\theta_2$.

\item The fractions of each function of $V_{\theta_i}$ are given a common denominator using the \texttt{Together} command. Since we are looking for solutions to $\mu^{-1}\nabla V=0$, we consider only the numerators of these equations using the \texttt{Numerator} command. The resulting equations are referred to as $V_{\theta_i}^{num}$. Note that the denominators of $V_{\theta_i}$ are \begin{equation*}2 (-1 + \cos(\theta_1 - \theta_4)) (-1 + \cos(\theta_2 - \theta_4)) (-1 + \cos(\theta_3 - \theta_4)),\end{equation*} the roots of which correspond to collisions of the vortices and can be ignored. 

\item The trigonometric terms are expanded using trigonometric identities by the \texttt{TrigExpand} command in Mathematica. These trigonometric identities can introduce some fractions with constant denominators, so we again apply the \texttt{Together} and \texttt{Numerator} commands. The equations are factored again using the \texttt{Factor} command, and any roots corresponding to collisions are removed. These collision roots are $\sin(\theta_2)$, corresponding to a collision of the second vortex with the first vortex with $\theta_1=0$ or with the third vortex in the cases when we assume $\theta_3=\pi$.

\item The trigonometric terms are replaced using the tangent half angle identities
\begin{equation} \sin(\theta_2)=\frac{2r}{1+r^2} \qquad \cos(\theta_2)=\frac{r^2-1}{1+r^2} \label{eq:tangent_half_angle}\end{equation}
These identities are such that $r=0$ corresponds to $\theta_2=\pi$, $r>0$ corresponds to $\theta_2 \in (0,\pi)$, and $r<0$ corresponds to $\theta_2 \in (-\pi,0)$. Furthermore, these corresponcies are symmetric since $r^*$ corresponds to $\theta_2^*$ if and only if $-r^*$ corresponds to $-\theta_2^*$. 

\item Again the equations are factored using the \texttt{Factor} command and the denominators are discarded using the \texttt{Numerator} command. Factors that correspond to collisions of vortices are removed from the equations. These factors are powers of $r$ (corresponding to $\theta_2=\theta_3=\pi$ in the kites and rectangles case), and $-1+3r^2$ (corresponding to $\theta_2=2\pi/3$, implying $\theta_4 \mod 2\pi =\theta_1=0$ in the trapezoids with three equal sides case). At this point we have polynomial equations whose roots correspond to the zeros of $\mu^{-1}\nabla V=0$. These polynomial equations are referred to as $V_{\theta_i}^{num*}$.

\item A Gr\"{o}bner basis is calculated for the ideal generated by $V_{\theta_i}^{num*}$ or for a saturated ideal generated by $V_{\theta_i}^{num*}$ and other polynomials. The monomial and elimination orderings used are given in each proof. 

\item Any additional analysis on the Gr\"{o}bner basis concludes the proof.
\end{enumerate}

\section{The $1+N$-gon with $N=4$}\label{section:square}
The infinitesimal vortices in the $(1+4)$-gon inscribe a square in the unit circle.

\begin{theorem}\label{thm:square} Assuming $\mu_i\neq0$, the $(1+4)$-gon must have two equal pairs of vortices at opposite corners of the square. \end{theorem}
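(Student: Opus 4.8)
The plan is to follow the methodology outlined in Section~\ref{process}, specializing the gradient $\mu^{-1}\nabla V$ to the square configuration $\theta_1=0,\ \theta_2=\pi/2,\ \theta_3=\pi,\ \theta_4=3\pi/2$, and then show that the resulting system on the circulation weights $(\mu_1,\mu_2,\mu_3,\mu_4)$ forces $\mu_1=\mu_3$ and $\mu_2=\mu_4$. First I would substitute the four angle values into $V_{\theta_i}=\sum_{j\neq i}\mu_j\left(-\sin(\theta_i-\theta_j)+\frac{\sin(\theta_i-\theta_j)}{2-2\cos(\theta_i-\theta_j)}\right)$. Since the square has all angle differences equal to $\pm\pi/2$ or $\pi$, each $\sin(\theta_i-\theta_j)$ is $0$ or $\pm 1$ and each $\cos(\theta_i-\theta_j)$ is $0$ or $-1$, so the equations $V_{\theta_2}=V_{\theta_3}=V_{\theta_4}=0$ collapse to \emph{linear} equations in the $\mu_i$ with rational coefficients — there is no position variable $r$ left to eliminate, which simplifies step~7 considerably.

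Next I would solve that linear system. The expected outcome is that the coefficient matrix has rank $2$, and its kernel is spanned by $(1,0,1,0)$ and $(0,1,0,1)$; equivalently, the only solutions are $\mu_1=\mu_3$ and $\mu_2=\mu_4$. Concretely, evaluating the two pieces: the pure-$\cos$ term $-\sin(\theta_i-\theta_j)$ and the $\frac{\sin}{2-2\cos}$ term each contribute; for the pair with difference $\pi$ the second term vanishes (it has a $\sin$ in the numerator), and for pairs with difference $\pm\pi/2$ the second term contributes $\pm\tfrac12$. So, e.g., $V_{\theta_2}=0$ becomes (after collecting) a relation of the form $a(\mu_1-\mu_3)=0$ with $a\neq0$, and similarly $V_{\theta_3}=0$ gives $b(\mu_2-\mu_4)=0$; the third equation $V_{\theta_4}=0$ should be dependent (consistent with Note~2 that the four $V_{\theta_i}$ sum to zero). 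I would present the explicit coefficients and conclude $\mu_1=\mu_3$, $\mu_2=\mu_4$, which is exactly the statement that the square has two equal pairs of vortices at opposite corners.

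The main obstacle is essentially bookkeeping rather than conceptual: one must be careful with signs of the $\sin(\theta_i-\theta_j)$ terms (they are antisymmetric in $i,j$) and must double-check that the $\frac{\sin}{2-2\cos}$ terms for the diagonal pairs (difference $\pi$) genuinely drop out, so that the only surviving coefficients come from adjacent pairs. A secondary point to verify is that the system does not admit spurious solutions where some $\mu_i=0$; since the theorem explicitly assumes $\mu_i\neq0$, and since the derived relations $\mu_1=\mu_3,\ \mu_2=\mu_4$ are homogeneous linear, there is no degenerate branch to rule out — any nonzero scaling of $(\mu_1,\mu_2,\mu_1,\mu_2)$ works, consistent with the scaling symmetry of $V$ noted after Theorem~\ref{thm:V}. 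If, contrary to expectation, the linear system turns out to have rank $3$ forcing all $\mu_i$ equal, that would be a strictly stronger conclusion and I would report it; but the phrasing of the theorem ("two equal pairs") signals that rank $2$ is the correct picture, with the fully-symmetric case $\mu_1=\mu_2=\mu_3=\mu_4$ sitting inside it as a special point.
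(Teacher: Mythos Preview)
Your proposal is correct and follows essentially the same route as the paper: both substitute the square angles $\theta=(0,\pi/2,\pi,3\pi/2)$ into the gradient components, obtain the linear relations $\tfrac12(\mu_1-\mu_3)=0$ and $\tfrac12(\mu_2-\mu_4)=0$ (up to sign), and conclude $\mu_1=\mu_3$, $\mu_2=\mu_4$. The paper adds a one-line remark that permuting the labels yields the same ``opposite corners'' conclusion, which you may wish to include for completeness.
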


\begin{proof}
Let $\theta_1=0$, $\theta_2=\pi/2$, $\theta_3=\pi$, and $\theta_4=3\pi/2$  so that the vortices are ordered $v_1<v_2<v_3<v_4$ as the corners of a square inscribed in the unit cirlce. Then $v_1$ is opposite $v_3$ and $v_2$ is opposite $v_4$. Substituting these values of $\theta_i$ into $V_{\theta_i}^{num}$ gives 
\begin{equation}\tfrac12\{(\mu_4-\mu_2),(\mu_1-\mu_3), (\mu_2-\mu_4),(\mu_3-\mu_1)\}.\end{equation} The zero set of these polynomials gives $\mu_1=\mu_3$ and $\mu_2=\mu_4$. Thus the vortices on opposite corners must have equal circulations. Because of the symmetry of $V(\theta)$, any permutation of $v_1,v_2,v_3,v_4$ will result in vortices on opposite corners having equal circulations. \end{proof}

This theorem gives sufficient but not necessary conditions for a $(1+4)$-gon. Two equal pairs of vortices can also give kite (see Theorem \ref{thm:kite configs}) and anti-symmetric configurations of the vortices. 
However, with two equal pairs and the assumption of rectangular or trapezoid with three equal sides type symmetry, the configuration is necessarily a square. See Lemmas \ref{lem:recs_squares} and \ref{lem:traps_squares}.

\begin{theorem} \label{square stability}
The $(1+4)$-gon is a nondegenerate critical point of $V(\theta)$ except when the pairs of vortices are in a $3:2$ ratio, and are never linearly stable, using the criteria in Theorem \ref{stability theorem}.
\end{theorem}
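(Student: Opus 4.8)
The plan is a direct eigenvalue computation. By Theorem~\ref{thm:square} the $(1+4)$-gon already forces $\mu_1=\mu_3=:a$ and $\mu_2=\mu_4=:b$ with $a,b\neq 0$, so both claims are statements about the $4\times4$ matrix $V_{\theta\theta}(\bar\theta)$ at $\bar\theta=(0,\pi/2,\pi,3\pi/2)$, viewed as a function of the single ratio $a/b$. First I would assemble this Hessian explicitly. Writing $g(\phi)=\cos\phi+\tfrac12\log(2-2\cos\phi)$, so that $V=\sum_{i<j}\mu_i\mu_j\,g(\theta_i-\theta_j)$ (the sign convention under which the stated formula for $V_{\theta_i}$ holds), the off-diagonal entries of the Hessian are $-\mu_i\mu_j\,g''(\theta_i-\theta_j)$ and the diagonal entries are minus the row sums; the latter is forced by the identity $V_{\theta_1}+V_{\theta_2}+V_{\theta_3}+V_{\theta_4}\equiv 0$, which also guarantees one structural zero eigenvalue with eigenvector $(1,1,1,1)$. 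Since $g''(\phi)=-\cos\phi-\tfrac1{2-2\cos\phi}$, only the values $g''(\pm\pi/2)=-\tfrac12$ and $g''(\pm\pi)=\tfrac34$ occur, and substituting $\bar\theta$ yields a matrix whose entries are rational combinations of $a^2,\,ab,\,b^2$.

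Next I would diagonalize this matrix by exploiting its symmetry rather than by brute force. The square with weight pattern $(a,b,a,b)$ is invariant under the half-turn relabelled by $(1\,3)(2\,4)$ and under reflection across the $x$-axis relabelled by $(2\,4)$; these generate a $\mathbb Z_2\times\mathbb Z_2$ action on $\mathbb R^4$ that commutes with $V_{\theta\theta}(\bar\theta)$, and — because the diagonal matrix $\mu$ has the same $(a,b,a,b)$ pattern — it commutes with $\mu^{-1}V_{\theta\theta}(\bar\theta)$ as well. The common invariant subspaces are $\operatorname{span}\{(1,0,1,0),(0,1,0,1)\}$ together with the lines $\operatorname{span}(1,0,-1,0)$ and $\operatorname{span}(0,1,0,-1)$. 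On the two lines $V_{\theta\theta}(\bar\theta)$ acts as multiplication by $a(\tfrac32a-b)$ and $b(\tfrac32b-a)$ respectively, and on the two-dimensional block it has eigenvalues $0$ (on $(1,1,1,1)$) and $-2ab$ (on $(1,-1,1,-1)$). Hence $V_{\theta\theta}(\bar\theta)$ has exactly the one structural zero eigenvalue unless $-2ab=0$ — impossible since $a,b\neq0$ — or $a(\tfrac32a-b)=0$ or $b(\tfrac32b-a)=0$, i.e.\ unless $b=\tfrac32a$ or $a=\tfrac32b$. That is precisely the statement that the $(1+4)$-gon is a nondegenerate critical point of $V$ except when the two pair-circulations are in the ratio $3:2$, and the residual permutation symmetry of $V$ makes this independent of the labelling used.

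For the stability claim I would conjugate the same three blocks by $\mu^{-1}=\operatorname{diag}(1/a,1/b,1/a,1/b)$. Each line is also an eigenvector of $\mu^{-1}$, so the line eigenvalues simply rescale to $\tfrac32a-b$ and $\tfrac32b-a$; the two-dimensional block becomes a rank-one $2\times2$ matrix of trace $-(a+b)$ and determinant $0$, contributing eigenvalues $0$ and $-(a+b)$. Thus the spectrum of $\mu^{-1}V_{\theta\theta}(\bar\theta)$ is $\{0,\,-(a+b),\,\tfrac32a-b,\,\tfrac32b-a\}$. By Theorem~\ref{stability theorem}, linear stability requires the other $N-1=3$ eigenvalues to be positive, i.e.\ $-(a+b)>0$, $\tfrac32a-b>0$, and $\tfrac32b-a>0$. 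But adding the last two inequalities gives $\tfrac12(a+b)>0$, contradicting the first. Hence $\mu^{-1}V_{\theta\theta}(\bar\theta)$ never has three positive eigenvalues, so the $(1+4)$-gon is never linearly stable.

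The computation is short and closed-form, so there is no serious obstacle; the points that need care are (i) fixing the overall sign of the Hessian consistently with the paper's gradient convention — the displayed $V$ and its stated gradient differ by a sign, but this only flips the whole spectrum, and the same three-way inequality argument still produces a contradiction — and (ii) verifying that the $\mathbb Z_2\times\mathbb Z_2$ symmetry is genuinely inherited by $\mu^{-1}V_{\theta\theta}(\bar\theta)$, which holds exactly because Theorem~\ref{thm:square} pinned the weights to the symmetric pattern $(a,b,a,b)$.
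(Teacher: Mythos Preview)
Your argument is correct and reaches the same conclusions as the paper, but the route is genuinely different. The paper simply substitutes $\bar\theta$ and $\mu_3=\mu_1,\ \mu_4=\mu_2$ into $V_{\theta\theta}$ and $\mu^{-1}V_{\theta\theta}$, asks Mathematica for the eigenvalues, and then invokes \texttt{Reduce} to certify that the three nonzero eigenvalues of $\mu^{-1}V_{\theta\theta}$ cannot all be positive. You instead exploit the $\mathbb Z_2\times\mathbb Z_2$ symmetry of the weighted square to block-diagonalize by hand, obtaining the spectrum $\{0,\,-2ab,\,a(\tfrac32a-b),\,b(\tfrac32b-a)\}$ for $V_{\theta\theta}$ and $\{0,\,-(a+b),\,\tfrac32a-b,\,\tfrac32b-a\}$ for $\mu^{-1}V_{\theta\theta}$; your eigenvalues are the negatives of the paper's, exactly because you adopted the sign convention that matches the paper's displayed gradient rather than its displayed $V$. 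Your replacement of the \texttt{Reduce} step by the one-line observation that $(\tfrac32a-b)+(\tfrac32b-a)=\tfrac12(a+b)$ contradicts $-(a+b)>0$ is cleaner and makes the instability transparent without computer assistance. The only delicate point you flag yourself: the symmetry descends to $\mu^{-1}V_{\theta\theta}$ precisely because $\mu=\operatorname{diag}(a,b,a,b)$ shares the $(1\,3)(2\,4)$ and $(2\,4)$ invariance, and this is indeed guaranteed by Theorem~\ref{thm:square}.
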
 

\begin{proof}
We substitute $\theta_1=0$, $\theta_2=\pi/2$, $\theta_3=\pi$, and $\theta_4=3\pi/2$ and $\mu_3=\mu_1$, $\mu_4=\mu_2$ into the Hessian $V_{\theta\theta}$, and compute the eigenvalues using Mathematica: 
\begin{equation*}\lambda = 0, 2 \mu_1 \mu_2, \tfrac12 (-3 \mu_1^2 + 2 \mu_1 \mu_2), \tfrac12 (2 \mu_1 \mu_2 - 3 \mu_2^2) \end{equation*}
When $\mu_2=\frac32\mu_1$ or $\mu_2=\frac23\mu_1$, there is more than one zero eigenvalue and the critical point is degenerate.

To determine linear stability, we calculate the eigenvalues of the weighted Hessian $\mu^{-1}V_{\theta\theta}$ in Mathematica:
\begin{equation*} \lambda= 0, \tfrac12 (2 \mu_1 - 3 \mu_2), \tfrac12 (-3 \mu_1 + 2 \mu_2), \mu_1 + \mu_2 \end{equation*}
Linear stability criterion in Theorem \ref{stability theorem} requires $N-1$ positive eigenvalues of $\mu^{-1}V_{\theta\theta}$. Using \texttt{Reduce} in Mathematica on the equations \begin{equation*}  \tfrac12 (2 \mu_1 - 3 \mu_2)>0, \quad \tfrac12 (-3 \mu_1 + 2 \mu_2)>0, \quad  \mu_1 + \mu_2>0 \end{equation*} 
produces a \texttt{False} result. In other words, it is never possible for all three nonzero eigenvalues to be positive, and the $(1+4)$-gon is never linearly stable.  \hfill $\square$\end{proof} 

Note that when $\mu_i=1$ for all $i$ or when $\mu_i=-1$ for all $i$, our results coincide with the stability results of the $(1+N)$-gon in \cite{barry2012relative}.

\FloatBarrier
\section{Rectangles}
	We use the property that rectangles are symmetric by reflection through their centroid. By inscribing the rectangle in the unit circle, we put the centroid at the origin. Assume that vortices $v_1$ and $v_3$ are symmetric by reflection through the origin, as is the pair $v_2$ and $v_4$.
	
	\begin{lemma} \label{lem:recs_squares}
	Assume that the positions of the vortices $v_1$ and $v_3$ are symmetric by reflection through the origin, as are the positions of the pair $v_2$ and $v_4$. If $\mu_1=\mu_3$ and $\mu_2=\mu_4$, $\mu_i \neq 0$ for $i=1,2,3,4$, then the vortices form the square configuration. 
\end{lemma}

\begin{proof} Without loss of generality, assume $\theta_1=0$, and the vortices are symmetric by reflection through the origin so that $v_3$ is the reflection $v_1$. Then $\theta_3=\pi$. Assume $\theta_2\in(-\pi,\pi)\setminus\{ 0 \}$ and $\theta_4=\theta_2+\pi$. Configurations of this type can be described by a one parameter family $\theta=(0,\theta_2,\pi,\theta_2+\pi)$. Additionally assume $\mu_1=\mu_3$ and $\mu_2=\mu_4$. Substituting these values into $\mu^{-1}\nabla V$ and simplifying as in Section \ref{process} gives the following
\begin{align}
V_{\theta_2}^{num} & = 2 \mu_1 \cos(\theta_2)\sin(\theta_2) \\
V_{\theta_3}^{num} & = -2\mu_2 \cos(\theta_2) \sin(\theta_2)\\
V_{\theta_4}^{num} & = 2\mu_1 \cos(\theta_2) \sin(\theta_2)
\end{align}
We ignore the factor of $\sin(\theta_2)$ which corresponds to collisions of $v_2$ with $v_1$ or $v_3$, and consider roots for which $\mu_1, \mu_2 \neq 0$, we see that $\cos(\theta_2)=0$ so that $\theta_2 = \pm \pi/2$, and the configuration must be a square. \hfill $\square$
\end{proof}
	
	\begin{theorem}\label{thm:rectangles}
		Assume that the positions of the vortices $v_1$ and $v_3$ are symmetric by reflection through the origin, as is the pair $v_2$ and $v_4$. Then the only rectangular configuration that is not the square occurs when the lines containing each pair are at an angle of $\frac{\pi}{4}$ and when the circulations satisfy $\mu_1=-\mu_3$ and $\mu_2=-\mu_4$, $\mu_i\neq 0$ for $i=1$ to $4$.
	\end{theorem}

\begin{proof} Without loss of generality, assume $\theta_1=0$, and the vortices are symmetric by reflection through the origin so that $v_3$ is the reflection $v_1$. Then $\theta_3=\pi$. Assume $\theta_2\in(-\pi,\pi)\setminus\{ 0\}$ and $\theta_4=\theta_2+\pi$. Configurations of this type can be described by a one parameter family $\theta=(0,\theta_2,\pi,\theta_2+\pi)$. Additionally assume $\mu_1\neq \mu_3$ and $\mu_2\neq \mu_4$.

Substituting $\theta_1=0,\, \theta_3=\pi$ and $\theta_4=\theta_2+\pi$ into $\mu^{-1}\nabla V$ and simplifying as described in Section \ref{process} gives the following
\begin{align}
V_{\theta_2}^{num}&= ((\mu_1+\mu_3) \cos(\theta_2) + (\mu_1-\mu_3) \cos(2\theta_2)) \sin(\theta_2)\\
V_{\theta_3}^{num}&= - ((\mu_2+\mu_4) \cos(\theta_2) +(-\mu_2+\mu_4) \cos(2\theta_2)) \sin(\theta_2) \\
V_{\theta_4}^{num}&=((\mu_1 + \mu_3) \cos(\theta_2) + (\mu_3 - \mu_1) \cos(2\theta_2)) \sin(\theta_2) 
\end{align}
After removing the $\sin(\theta_2)$ factor from all three equations, expanding trig functions and using the tangent half-angle change of variables, we get the polynomials 
\begin{align}
V_{\theta_2}^{num^*}&=4(-\mu_3 - 3 \mu_1 r^2 + 3 \mu_3 r^2 + \mu_1 r^4)\\
V_{\theta_3}^{num^*}&=4( \mu_2 - 3 \mu_2 r^2 + 3 \mu_4 r^2 - 
 \mu_4 r^4)\\
V_{\theta_4}^{num^*}&=4(-\mu_1 + 3 \mu_1 r^2 - 3 \mu_3 r^2 + \mu_3 r^4)
\end{align}

We will now saturate the ideal by including the polynomials $\nu(\mu_1-\mu_3)+1$ and $\omega(\mu_2-\mu_4)+1$ so that $\mu_1=\mu_3$ and $\mu_2=\mu_4$ are not included in the ideal, and we eliminate the possibility of a square configuration. We project the ideal $\langle V_{\theta_2}^{num}\textsuperscript{*},V_{\theta_3}^{num}\textsuperscript{*}, V_{\theta_4}^{num}\textsuperscript{*}, \nu(\mu_1-\mu_3)+1, \omega(\mu_2-\mu_4)+1\rangle$ on to the $(\mu_1,\mu_2,\mu_3,\mu_4,r)$-space. We specify the elimination ordering $\nu>\omega>\mu_1>\mu_2>\mu_3>\mu_4>r$ and use a Gr\"{o}bner basis to calculate the elimination ideal in \textit{grevlex} ordering that removes the extra variables $\nu$ and $\omega$. We perform Gr\"{o}bner basis algorithm through Mathematica to obtain the following basis for the elimination ideal
 \begin{equation} \{ \mu_2+\mu_4, \mu_1+\mu_3,1-6r^2+r^4\}
\end{equation}
Thus rectangular configurations that are not squares must satisfy $\mu_2=-\mu_4$, $\mu_1=-\mu_3$, and $1-6r^2+r^4=0$. This last equation gives the possible positions of $v_2$ and $v_4$. The roots of $1-6r^2+r^4=0$ are $r=1\pm \sqrt{2}, -1\pm \sqrt{2}$, corresponding to $\theta_2=\pm \pi/4, \pm 3\pi/4$. \hfill $\square$
\end{proof}

\begin{theorem} \label{recstability} Rectangular configurations that are not squares are nondegenerate critical points of $V(\theta)$ and always linearly unstable. \end{theorem}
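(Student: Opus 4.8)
The plan is to reduce to a single canonical configuration and then run the same Hessian computation used in the proof of Theorem \ref{square stability}. By the previous theorem, a rectangular relative equilibrium that is not a square satisfies $\mu_3=-\mu_1$, $\mu_4=-\mu_2$ with $\mu_i\neq 0$, and the defining angle is $\theta_2\in\{\pi/4,3\pi/4,5\pi/4,7\pi/4\}$. These four angles are carried into one another by the rotation and reflection symmetries of $V$ together with a relabeling of the vortices (which permutes the reflected pairs consistently, preserving the constraint $\mu_3=-\mu_1$, $\mu_4=-\mu_2$), so it suffices to analyze $\theta=(0,\tfrac{\pi}{4},\pi,\tfrac{5\pi}{4})$. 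Moreover, by the scaling symmetry of $V$ in $\mu$, both the degeneracy count and the stability count depend only on the ratio $\mu_2/\mu_1$, so this is effectively a one-parameter problem.

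First I would substitute $\theta=(0,\tfrac{\pi}{4},\pi,\tfrac{5\pi}{4})$ and $\mu_3=-\mu_1$, $\mu_4=-\mu_2$ into the Hessian $V_{\theta\theta}$ and compute its eigenvalues as functions of $\mu_1,\mu_2$, exactly as in Theorem \ref{square stability}. One eigenvalue is identically $0$, corresponding to the rotational direction. To establish nondegeneracy it is enough to show that the remaining three eigenvalues — the roots of a cubic in $\lambda$ whose coefficients are polynomials in $\mu_1,\mu_2$ — are all nonzero; equivalently, that the product of these three eigenvalues (the nontrivial constant coefficient of that cubic) is a nonzero monomial in $\mu_1,\mu_2$, hence nonvanishing whenever $\mu_1\mu_2\neq 0$.

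Next I would form the weighted matrix $\mu^{-1}V_{\theta\theta}$ with $\mu=\operatorname{diag}(\mu_1,\mu_2,-\mu_1,-\mu_2)$, compute its eigenvalues, and apply the criterion of Theorem \ref{stability theorem}: linear stability would require all $N-1=3$ of its nonzero eigenvalues to be positive. I would then feed the three resulting inequalities (polynomial in $\mu_1,\mu_2$) to \texttt{Reduce} in Mathematica and expect a \texttt{False} output, exactly as in Theorem \ref{square stability}; alternatively one can exhibit by hand two of those eigenvalues whose product (or whose sum) is manifestly nonpositive for every $\mu_1,\mu_2$, which already forbids three positive eigenvalues.

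The main obstacle I anticipate is that $\mu$ has mixed signs here, so $\mu^{-1}V_{\theta\theta}$ is not symmetric and its eigenvalues are not guaranteed a priori to be real; one must check that the relevant eigenvalues are in fact real (or argue directly about the sign pattern of the characteristic polynomial) before the ``number of positive eigenvalues'' in Theorem \ref{stability theorem} is meaningful, and the sign bookkeeping may be delicate near the special ratios $\mu_2/\mu_1$ where eigenvalues collide. A secondary point to be careful about is justifying the reduction to the single angle $\theta_2=\pi/4$: one has to confirm that the symmetry group action indeed maps the other three non-square rectangles into this one while preserving the constraint $\mu_3=-\mu_1$, $\mu_4=-\mu_2$.
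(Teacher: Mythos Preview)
Your proposal is correct and follows essentially the same approach as the paper, whose entire proof is the single sentence ``follows easily by checking eigenvalues of $V_{\theta\theta}$ and $\mu^{-1}V_{\theta\theta}$.'' Your write-up is simply a more explicit version of that computation, with the added (and valid) reduction to a single representative angle via the symmetries of $V$; the caveat you raise about possible complex eigenvalues of the non-symmetric weighted Hessian is a reasonable point of care, but in the end the explicit computation settles it and the paper does not comment on it.
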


The proof of Theorem \ref{recstability} follows from substituting in $\mu_1=-\mu_3$, $\mu_2=-\mu_4$ and $\theta_1=0$, $\theta_3=\pi$, $\theta_4=\theta_2+\pi$ and $\theta_2 =\pm \pi/4, \pm 3\pi/4$ into the matrices $V_{\theta \theta}$ and $\mu^{-1}V_{\theta \theta}$ and using the \texttt{Eigenvalues} command in Mathematica. There are three nonzero eigenvalues of $V_{\theta \theta}$ for $\mu_1, \mu_2\neq 0$, and $\mu^{-1}V_{\theta \theta}$ has two zero eigenvalues, possibly indicating a bifurcation in the critical points of $V$.

\section{Trapezoids with three equal sides} \label{section:trap3equal}
	
An inscribed trapezoid with three equal sides with ordering $v_1,v_2,v_3,v_4$ of vortices around the unit circle can be parameterized as $\theta_1=0$, $\theta_2 \in (0,2\pi/3)$, $\theta_3=2\theta_2$, and $\theta_4=3\theta_2$. This trapezoid would have a line of reflection at an angle of $\frac{3}{2}\theta_2$. We start with a more general assumption of $\theta_2 \in (-\pi,\pi)\setminus \{\pm 2\pi/3\}$ (both of these values giving a collision between $v_1$ and $v_4$). 

\begin{lemma} \label{lem:traps_squares}
Assume $\mu_i\neq 0$ and assume $\theta_1=0$, $\theta_2-\theta_1=\theta_3-\theta_2=\theta_4-\theta_3$, i.e. $\theta_3=2\theta_2$ and $\theta_4=3\theta_2$, with $\theta_2 \in (-\pi,\pi)\setminus \{\pm 2\pi/3\}$. If $\mu_1=\mu_3$ and $\mu_2=\mu_4$, $\mu_i \neq 0$ for $i=1,2,3,4$, then the vortices form the square configuration. 
\end{lemma}

\begin{proof}
Let $\theta_1=0$, $\theta_3=2\theta_2$, and $\theta_4=3\theta_2$, $\theta_2\in(-\pi,\pi)\setminus \{\pm 2\pi/3\}$, and $\mu_1=\mu_2$ and $\mu_3=\mu_4$, as described in the lemma statement. Substituting these values into $\mu^{-1}\nabla V$, removing factors of $\sin(\theta_2)$ which would lead to collisions, we get equations $V_{\theta_2}^{num},V_{\theta_3}^{num},V_{\theta_4}^{num}$ in $\mu_1$, $\mu_2$, and $\theta_2$. Changing coordinates as in Section \ref{process}, we get polynomial equations:
\begin{align}
V_{\theta_2}^{num^*}& = 8 \mu_2 (-1 + r) (1 + r) (1 - 4 r + r^2) (1 + 4 r + r^2)\\
V_{\theta_3}^{num^*}& = -8 \mu_1 (-1 + r) (1 + r) (1 - 4 r + r^2) (1 + 4 r + r^2)\\
\begin{split}
V_{\theta_4}^{num^*}& = 32 (-1 + r) (1 + r) (-1 + 3 r^2) (-\mu_2 - 24 \mu_1 r^2 + 16 \mu_2 r^2 + 
   152 \mu_1 r^4 - 26 \mu_2 r^4\\& \hspace*{.15in}- 72 \mu_1 r^6 - 40 \mu_2 r^6 + 8 \mu_1 r^8 + 
   3 \mu_2 r^8) \end{split}
\end{align}
We see $r=\pm 1$ are two roots of these equations, but we will use a Gr\"{o}bner Basis to consider others. Additionally, we want roots where $\mu_i\neq 0$, so we saturate the ideal by adding two polynomials $\nu\mu_1+1$ and $\omega \mu_2+1$. 
We project the variety of the ideal $\langle V_{\theta_2}^{num}\textsuperscript{*},V_{\theta_3}^{num}\textsuperscript{*}, V_{\theta_4}^{num}\textsuperscript{*}\rangle$ on to $(\mu_1,\mu_2,r)$-space. We specify an elimination ordering $\nu > \omega > \mu_1> \mu_2> r$ and use a Gr\"{o}bner basis to calculate the elimination ideal in \textit{grevlex} ordering that removes the extra variables $\nu$ and $\omega$. We perform Gr\"{o}bner basis algorithm through Mathematica with the elimination ordering to obtain a basis $\{-1+r^2\}$ for the elimination ideal. Thus $r=\pm 1$, corresponding to $\theta_2=\pi/2$ or $-\pi/2$. Thus the only nontrivial configurations of trapezoids with three equal sides and circulation parameters $\mu_1=\mu_3$, $\mu_2=\mu_4$ is the square configuration. \hfill $\square$
\end{proof}

\begin{theorem}\label{thm:3equalsides}
Assume $\mu_i\neq 0$ and assume $\theta_1=0$, $\theta_2-\theta_1=\theta_3-\theta_2=\theta_4-\theta_3$, i.e. $\theta_3=2\theta_2$ and $\theta_4=3\theta_2$, with $\theta_2 \in (-\pi,\pi)\setminus\{\pm 2\pi/3\}$. Additionally, assume that the vortices do not form a square, i.e. $\mu_1\neq \mu_3$ and $\mu_2\neq \mu_4$. Then there are six possible values of $\theta_2$ that satisfy this symmetry, corresponding to the six real roots of the polynomial \begin{equation}
g(r)=-1+33r^2-202r^4+146r^6-117r^8+13r^{10}.\end{equation}
The corresponding values of $\theta_2$ and the set of circulation parameters are given by $\theta_2=\arcsin(2r/(1+r^2))$  and $\{(b(r)\mu_2 + a(r)\mu_3, \mu_2,\mu_3,a(r)\mu_2 +b(r) \mu_3 ) : \mu_2,\mu_3 \in \mathbb{R}\}$, where 
\begin{align} a(r) &  =\tfrac1{272}(-753 + 16124 r^2 - 11428 r^4 + 10036 r^6 - 1131 r^8) \\
 b(r) & = \tfrac{1}{4352}(2827 - 168410 r^2 + 119616 r^4 - 107718 r^6 + 12181 r^8).
 \end{align}
\end{theorem}

\begin{proof}
Let $\theta_1=0$, $\theta_3=2\theta_2$, and $\theta_4=3\theta_2$, $\theta_2\in(-\pi,\pi)\setminus\{\pm 2\pi/3\}$, as described in the theorem statement. Substituting these values into $\mu^{-1}\nabla V$, and changing coordinates as in Section \ref{process}, we get equations $V_{\theta_2}^{num^*},V_{\theta_3}^{num^*},V_{\theta_4}^{num^*}$. The equation associated with $V_{\theta_4}^{num^*}$ contains the factor $(-1 + 3 r^2)$, which we disregard because in this parameterization $r=\pm 1/\sqrt{3}$ corresponds to $\theta_2=\pm 2\pi/3$ and $\theta_4 \text{ mod } 2\pi = \theta_1=0 $. We consider the polynomial equations
\begin{align}
V_{\theta_2}^{num^*}&=-8(\mu_4 - 6 \mu_1 r^2 + 6 \mu_3 r^2 - 15 \mu_4 r^2 - 4 \mu_1 r^4 + 4 \mu_3 r^4 + 
 15 \mu_4 r^4 + 2 \mu_1 r^6 - 2 \mu_3 r^6 - \mu_4 r^6) \label{3equal-pre-GBpolys-1} 
 \\
V_{\theta_3}^{num^*}&= -8(-\mu_1 + 15 \mu_1 r^2 - 
 6 \mu_2 r^2 + 6 \mu_4 r^2 - 15 \mu_1 r^4 - 4 \mu_2 r^4 + 4 \mu_4 r^4 + 
 \mu_1 r^6 + 2 \mu_2 r^6 - 
 2 \mu_4 r^6)  \label{3equal-pre-GBpolys}\\
 \begin{split}
V_{\theta_4}^{num^*}&= 32(\mu_2 + 18 \mu_1 r^2 - 17 \mu_2 r^2 + 6 \mu_3 r^2 - 
   168 \mu_1 r^4 + 42 \mu_2 r^4 - 8 \mu_3 r^4 + 252 \mu_1 r^6 \\
   &\hspace*{.15in}+ 14 \mu_2 r^6 - 
   28 \mu_3 r^6 - 72 \mu_1 r^8 - 43 \mu_2 r^8 - 8 \mu_3 r^8 + 2 \mu_1 r^{10} + 
   3 \mu_2 r^{10} + 6 \mu_3 r^{10}) \label{3equal-pre-GBpolys-3} 
   \end{split}
\end{align}

We will now saturate the ideal by including the polynomials $\nu(\mu_1-\mu_3)+1$ and $\omega(\mu_2-\mu_4)+1$, so that $\mu_1=\mu_3$ and $\mu_2=\mu_4$ are not included in the ideal and we eliminate the possibility of a square configuration. Additionally we saturate the ideal by including the polynomial $z\mu_1+1$, which eliminates the possibility that $\mu_1=0$. (One can also choose $z\mu_4+1$ to get the same result.) We project the ideal $\langle V_{\theta_2}^{num}\textsuperscript{*},V_{\theta_3}^{num}\textsuperscript{*}, V_{\theta_4}^{num}\textsuperscript{*}, \nu(\mu_1-\mu_3)+1, \omega(\mu_2-\mu_4)+1, z\mu_1+1\rangle$ on to the $(\mu_1,\mu_2,\mu_3,\mu_4,r)$-space. We specify the elimination ordering $\nu>\omega>z>\mu_1>\mu_4>\mu_3>\mu_2>r$ and use a Gr\"{o}bner basis to calculate the elimination ideal in \textit{lex} ordering that removes the extra variables $\nu$, $\omega$, and $z$. We perform Gr\"{o}bner basis algorithm through Mathematica to obtain the following basis of three polynomials for the elimination ideal:
\begin{equation} \{ g(r), f_1 (\mu_2,\mu_3,\mu_4,r) , f_2(\mu_1,\mu_2,\mu_3,r)\} \end{equation} 
where 
\begin{align} g(r) & =-1 + 33 r^2 - 202 r^4 + 146 r^6 - 117 r^8 + 13 r^{10} \label{eq:trap_g}\\
\begin{split} f_1(\mu_2,\mu_3,\mu_4,r) & = 12048 \mu_2 - 2827 \mu_3 + 4352 \mu_4 - 257984 \mu_2 r^2 + 168410 \mu_3 r^2 + 
  182848 \mu_2 r^4 \\ & - 119616 \mu_3 r^4 - 160576 \mu_2 r^6 + 107718 \mu_3 r^6 + 
  18096 \mu_2 r^8 - 12181 \mu_3 r^8
\end{split}\\
f_2(\mu_1,\mu_2,\mu_3,r)&=f_1(\mu_3,\mu_2,\mu_1,r)
\end{align}
First note that the polynomial $f_1(\mu_2,\mu_3,\mu_4,r)$ is linear in $\mu_2$, $\mu_3$, and $\mu_4$. Specifically we can solve  $f_1(\mu_2,\mu_3,\mu_4,r)=0$ for $\mu_4$ in the form $\mu_4=a(r)\mu_2+b(r)\mu_3$, where 
\begin{align} a(r) & =\tfrac1{272}(-753 + 16124 r^2 - 11428 r^4 + 10036 r^6 - 1131 r^8)  \label{eq:a(r)}\\
 b(r) & =\tfrac{1}{4352}(2827 - 168410 r^2 + 119616 r^4 - 107718 r^6 + 12181 r^8)\label{eq:b(r)}
 \end{align}
 Also the third polynomial $f_2(\mu_1,\mu_2,\mu_3,r)$ is linear in $\mu_1$, $\mu_2$, $\mu_3$, and is given by a permutation and substitution variables in $f_1$ so that we can solve for $\mu_1$ in the form $\mu_1=b(r)\mu_2+a(r)\mu_3$. 

Now consider the first polynomial $g(r)$. This is an even function so we make the substitution $r^2=x$ and get the polynomial $f(x) = -1 + 33 x - 202 x^2 + 146 x^3 - 117 x^4 + 13 x^5$, which will have only nonnegative roots. We calculate the Sturm sequence:
\begin{align}
p_0 &= f(x)\\
p_1 & = f'(x)\\
p_{i} & = - \text{Rem}(p_{i-2},p_{i-1}) \text{ for } i = 2,...,5
\end{align}
Let $v(a)$ be the number of sign changes in the Sturm sequence evaluated at $a$. 
We find 
\begin{align}
v(0) & = 4\\
v(1/10) & = 3\\
v(2/10) & = 2 \\
v(7.81) & = 2\\
v(7.82) & = 1
\end{align}
as well as $v(+\infty) = 1$ where the sign of the polynomial at $+\infty$ is the sign of its leading coefficient.
From this we can conclude that $f(x)$ has three positive nonzero real roots in the intervals $(0,1/10]$, $(1/10,2/10]$, and $(7.81,7.82]$. Numerically these are \begin{equation}
x \approx 0.0396673, 0.141048,7.81164. \end{equation}
so that there are six real roots of $g(r)$: 
\begin{equation}
r \approx \pm0.199167,\pm 0.375563,\pm 2.79493 . \label{r2values} \end{equation}
Solving for $\theta_2$ values that correspond to $r$ in the tangent half-angle identities in Equation \eqref{eq:tangent_half_angle}, we get
\begin{align}
 r=\pm 2.79493 &\Leftrightarrow \theta_2 \approx \pm 0.687197\\
r= \pm 0.375563 &\Leftrightarrow \theta_2 \approx \pm 2.42306\\
r= \pm 0.199167 &\Leftrightarrow \theta_2 \approx \pm 2.74840.
\end{align}
We can also give numerical approximations for the corresponding set of circulation parameters using $a(r)$ and $b(r)$ as given in equations \eqref{eq:a(r)} and \eqref{eq:b(r)}:
\begin{align} \begin{split} r=\pm 2.79493 & \Leftrightarrow \theta_2 \approx \pm 0.687197\\
 & \text{ and } (\mu_1,\mu_2,\mu_3,\mu_4) \approx (-0.638032\mu_2 + 1.31061\mu_3,\mu_2,\mu_3,1.31061\mu_2 -0.638032\mu_3)
 \end{split}\\
 \begin{split}
r= \pm 0.375563 &\Leftrightarrow \theta_2 \approx \pm 2.42306 \\
& \text{ and } (\mu_1,\mu_2,\mu_3,\mu_4) \approx (-4.33010\mu_2 + 4.85887\mu_3,\mu_2,\mu_3,4.85887\mu_2 -4.33010\mu_3)
\end{split}\\
\begin{split}
r= \pm 0.199167 &\Leftrightarrow \theta_2 \approx \pm 2.74840 \\
& \text{ and } (\mu_1,\mu_2,\mu_3,\mu_4) \approx (-0.843716\mu_2 -0.480743\mu_3,\mu_2,\mu_3,-0.480743\mu_2-0.843716\mu_3)   
\end{split}
\end{align}
 \hfill $\square$
\end{proof}

The three configurations with $r>0$ from Theorem \ref{thm:3equalsides} are pictured in Figure \ref{three equal sides pictures}. These all satisfy $\theta_2\in(0,\pi)$. The other three configurations are reflections over the horizontal axis. 
Note that two of the possible configurations have $\theta_2>2\pi/3$, and thus the inscribed quadrilateral is not a trapezoid with three equal sides. We get the following corollary.
	
\begin{figure}[h]
\centering
\begin{subfigure}[t]{.3\textwidth}
\includegraphics[width=1.4in]{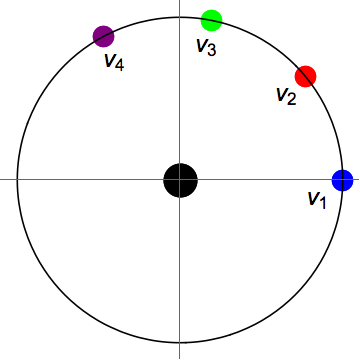}
\caption{$\theta_2 \approx 0.687197$}
\label{threeequal_ex1}
\end{subfigure} 
\begin{subfigure}[t]{.3\textwidth}
\includegraphics[width=1.4in]{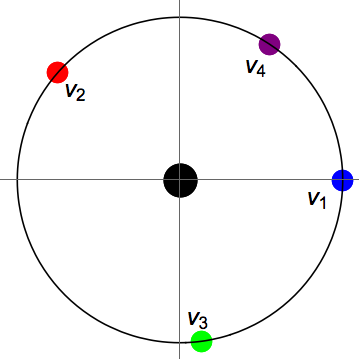}
\caption{$\theta_2 \approx  2.42306$}
\label{threeequal_ex3}
\end{subfigure} 
\begin{subfigure}[t]{.3\textwidth}
\includegraphics[width=1.4in]{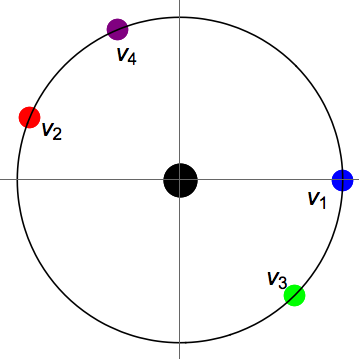}
\caption{$\theta_2 \approx  2.74840$}
\label{threeequal_ex2}
\end{subfigure} 
\caption{The three possible configurations satisfying $\theta_1=0, \theta_3=2\theta_2, \theta_4=3\theta_2$ and $\theta_2 \in (0,\pi)\setminus\{2\pi/3\}$ . The configuration in Figure \ref{threeequal_ex1} is the only configuration with $\theta_2\in (0,2\pi/3)$ and the only true inscribed trapezoid with three equal sides.}
\label{three equal sides pictures}
\end{figure}

\begin{corollary}
There is only one possible inscribed trapezoid with three equal sides in the $(1+4)$-vortex problem with vortex ordering $v_1<v_2<v_3<v_4$. Let $r^*\approx 2.79493$ be the largest positive root of the polynomial $g(r)=-1+33r^2-202r^4+146r^6-117r^8+13r^{10}$. Then the angle of the spacing is given by $\theta^* = \arcsin(2r^*/(1+r^{*2}))\approx 0.687197$ and circulation parameters in the set $\{(a(r^*)\mu_2 + b(r^*)\mu_3, \mu_2,\mu_3,b(r^*)\mu_2 +a(r^*) \mu_3 ) : \mu_2,\mu_3 \in \mathbb{R}\}$, where $a(r)$ and $b(r)$ are given in equations \eqref{eq:a(r)} and \eqref{eq:b(r)}.
\end{corollary}

In Figure \ref{threeequal_ex3} and \ref{threeequal_ex2}, we see examples of symmetric configurations with two degrees of freedom, \textit{general isosceles trapezoids}. These can be characterized by $\theta_i-\theta_j = \theta_k-\theta_{\ell}$ for $i\neq j\neq k\neq \ell$. In these case the angles $(\theta_4-\theta_2)$ and $(\theta_3-\theta_1)$ are equal. The conditions on the circulation parameters $\mu_i$ for these general isosceles trapezoids could be a line of future work.

\subsection{Numerical Results: Linear Stability of the trapezoid with equal sides}

Theorem \ref{thm:3equalsides} proves that there is a critical point of $V(\theta)$ satisfying the symmetry conditions of a trapezoid with three equal sides for all values of the parameters $\mu_2$ and $\mu_3$. Here we examine if any of these critical points are degenerate and the linear stability of the corresponding relative equilibria of the $(1+N)$-body problem.

The properties of nondegeneracy and linear stability depend on the eigenvalues of the Hessian matrix of $V(\theta)$ as described in Definition \ref{def:nondegen} and Theorem \ref{stability theorem}. 
The Hessian of $V(\theta)$, $H = V_{\theta \theta} $ is a symmetric matrix of the form
\begin{align}
H_{ij} & = -\mu_i\mu_j(\cos(\theta_i-\theta_j)-\frac{\cos(\theta_i-\theta_j)}{2-2\cos(\theta_i-\theta_j)}+\frac{2\sin^2(\theta_i-\theta_j)}{(2-2\cos(\theta_i-\theta_j)} \text{for i} \neq j \label{eq:hessian1}\\
H_{ii} & = \sum_{j\neq i} H_{ij}
\label{eq:hessian2}\end{align}
We substitute in the assumptions $\theta_1=0$, $\theta_3=2\theta_2$, $\theta_4=3\theta_2$, $\mu_1=b(r)\mu_2+a(r)\mu_3$, $\mu_4 = a(r)\mu_2+b(r)\mu_3$ into the Hessian, as well as use the command \texttt{TrigExpand}, and the tangent half angle substitutions. Using a working precision of 20, we numerically approximate $r \approx 2.79493$ as the largest root of $g(r)$ in Equation \eqref{eq:trap_g} (corresponding to the only configuration that is a trapezoid with three equal sides) and substitute this value into the Hessian. 

Since degeneracy and linear stability are preserved by multiplication by a positive scalar of the circulation parameters, we restrict to the set $\mu_2^2+\mu_3^2=1$ in the following calculations.

The characteristic polynomial of the Hessian has the form $\lambda^4+a\lambda^3+b\lambda^2+c\lambda$ where the coefficients $a$, $b$, and $c$ are defined in terms of $\mu_2$ and $\mu_3$. We see that $\lambda=0$ is an eigenvalue for $H$ as expected, and we look for $\lambda=0$ to be an eigenvalue with multiplicity two to determine degeneracy.  We solve for $c=0$ and $\mu_2^2+\mu_3^2=1$ where 
\begin{equation}
c=c_1 \mu_2^5 \mu_3 - c_2 \mu_2^4 \mu_3^2 - 
 c_3 \mu_2^3 \mu_3^3 - 
 c_2 \mu_2^2 \mu_3^4 + c_1 \mu_2 \mu_3^5 \end{equation} where $c_1 \approx 53.9615 $, $c_2 \approx 69.9672$, and $c_3 \approx 62.6947$. Using \texttt{Solve} in Mathematica, we find eight degenerate roots of $V(\theta)$, each with a corresponding ray in the $\mu_2\mu_3$-plane. These span the coordinate axes ($\mu_2=0$ and $\mu_3=0$) and two lines $\mu_3=k\mu_2$ where $k \approx 0.486821$ and $k\approx 2.05414$. These are shown as the black lines in Figure \ref{fig:three-linstab}.

To determine linear stability, we numerically calculate eigenvalues of the weighted Hessian $\mu^{-1}H$ for 6284 evenly spaced values on the circle $\mu_2^2+\mu_3^2=1$ (step size of 0.001).  We find that for angles in the interval of $(0.453, 1.118)$ radians, the weighted Hessian has three positive eigenvalues. We also find the region given by reflection through the origin corresponds to parameter values for which the weighted Hessian has three negative eigenvalues. These regions are shown in Figure \ref{fig:three-linstab}.

\begin{figure}[h]
\centering
\includegraphics[height=2.75in]{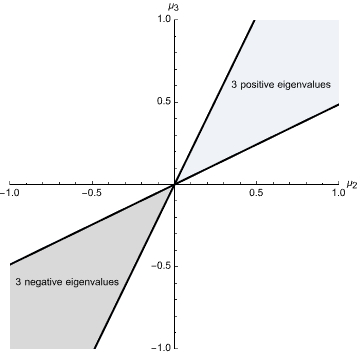}
\caption{Regions corresponding to the linear stability of relative equilibria forming a trapezoid with three equal sides. Critical points for parameter values on the boundaries of the region are degenerate. }
\label{fig:three-linstab}
\end{figure}

\FloatBarrier
 
\section{Kites}
A kite is symmetric by reflection over a line containing two of the infinitesimal vortices. 

\begin{theorem}\label{thm:kite}
Assume $\mu_i\neq 0$. Assume that a configuration of the $(1+4)$-vortex problem is a kite. Then the two vortices that are \emph{not} on the line of symmetry must have equal circulation parameters.\end{theorem}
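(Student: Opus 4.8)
The plan is to follow the standard pipeline laid out in Section~\ref{process}, specialized to the kite symmetry. First I would set $\theta_1=0$, $\theta_3=\pi$, $\theta_4=-\theta_2$, leaving $\theta_2$ as the single angular unknown and $\mu_1,\mu_2,\mu_3,\mu_4$ as parameters. Substituting into the three independent equations $V_{\theta_2}=V_{\theta_3}=V_{\theta_4}=0$, clearing denominators (the collision denominators $2(-1+\cos(\theta_i-\theta_j))$ are discarded), and then applying \texttt{TrigExpand} followed by the tangent half-angle substitution $\sin\theta_2=2r/(1+r^2)$, $\cos\theta_2=(r^2-1)/(1+r^2)$, I expect to get polynomial equations $V_{\theta_i}^{num*}$ in $r$ and the $\mu_i$, after stripping the collision factors (powers of $r$ corresponding to $\theta_2=\pi$, and factors like $1+r^2$).

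The key observation I would look for is that the reflective symmetry of the configuration should already force one relation for free: since $\theta_4=-\theta_2$, the function $V$ is even, and the pair $(v_2,v_4)$ is symmetric about the $x$-axis, so I expect the equation $V_{\theta_2}=0$ and the equation $V_{\theta_4}=0$ to be related by a sign flip $\theta_2\mapsto-\theta_2$, i.e. $V_{\theta_2}+V_{\theta_4}$ should vanish identically (or reduce to something trivial) under the kite ansatz regardless of the parameters. So the real content lives in two equations: $V_{\theta_2}=0$ (equivalently $V_{\theta_2}-V_{\theta_4}=0$) and $V_{\theta_3}=0$. The claim $\mu_2=\mu_4$ then has to come out of analyzing the ideal these generate. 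Concretely, I would compute a Gröbner basis for the ideal $\langle V_{\theta_2}^{num*},V_{\theta_3}^{num*},V_{\theta_4}^{num*}\rangle$ using an elimination order that kills $r$, projecting the variety onto $(\mu_1,\mu_2,\mu_3,\mu_4)$-space, and I would expect the factor $(\mu_2-\mu_4)$ to appear in the eliminant (times possibly other factors that must be excluded because they force $\mu_i=0$ or a collision).

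The main obstacle I anticipate is \textbf{not} the algebra of producing the Gröbner basis — Mathematica handles that — but rather the step of showing that the extraneous factors in the eliminant genuinely cannot occur, so that $\mu_2=\mu_4$ is forced. That is: after eliminating $r$, the eliminant will typically factor as a product, one factor being $(\mu_2-\mu_4)$ and the others being polynomials in the $\mu_i$; I need to argue each other factor either contradicts $\mu_i\neq0$, or corresponds to a degenerate/collision configuration already excluded, or yields no real $r$ with the configuration intact. A secondary subtlety is bookkeeping about which collision factors to strip at each stage (in the kite case $v_2$ can collide with $v_1$ when $\theta_2=0$, i.e. $r=0$, and with $v_3$ when $\theta_2=\pi$), and making sure the remaining polynomial system really is equivalent to $\mu^{-1}\nabla V=0$ on the physically relevant region. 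Once those exclusions are justified, the conclusion $\mu_2=\mu_4$ is immediate, and — as in Theorem~\ref{thm:square} — the symmetry of $V(\theta)$ under permutation of the vortices extends the statement to any labeling in which two vortices lie off the axis of symmetry.
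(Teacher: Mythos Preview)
Your approach matches the paper's proof exactly: same kite ansatz $\theta=(0,\theta_2,\pi,-\theta_2)$, same polynomial reduction, same elimination Gr\"obner basis in the order $r>\mu_1>\mu_2>\mu_3>\mu_4$. The only surprise is that the obstacle you anticipate never appears—the elimination ideal turns out to be simply $\langle\mu_2-\mu_4\rangle$ with no extraneous factors to exclude (indeed $V_{\theta_3}^{num}$ already factors as $(\mu_2-\mu_4)(1+2\cos\theta_2)\sin\theta_2$, and $V_{\theta_2}^{num}+V_{\theta_4}^{num}$ does not vanish identically as you guessed but is itself proportional to $\mu_2-\mu_4$, so the conclusion is visible even before the Gr\"obner computation).
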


\begin{proof}Without loss of generality assume $\theta_1=0$ and assume the line of symmetry along the horizontal axis contains $v_1$ and $v_3$ . Thus $\theta_3=\pi$, and configurations of this type can be described as a one-parameter family of configurations $\theta=(0,\theta_2,\pi,-\theta_2)$.

Substituting $\theta_1=0,\, \theta_3=\pi$ and $\theta_4=-\theta_2$ into $\mu^{-1}\nabla V$ and simplifying as described in Section \ref{process} gives the following 
\begin{align}
V_{\theta_2}^{num}&=
-2 ((\mu_1 + \mu_3) \cos(\theta_2) + (\mu_1 - \mu_3) \cos(2 \theta_2) + \mu_4 \cos(3 \theta_2)) \sin^3(
  \theta_2)\\
V_{\theta_3}^{num}&=
   (\mu_2 - \mu_4) (1 + 2 \cos(\theta_2)) \sin(
  \theta_2)\\
V_{\theta_4}^{num}&=
   2  ((\mu_1 + \mu_3) \cos(\theta_2) + (\mu_1 - \mu_3) \cos(2 \theta_2) + 
   \mu_2 \cos(3 \theta_2)) \sin^3(\theta_2)
\end{align}

After removing the $\sin(\theta_2)$ factor from all three equations, expanding trig functions and using the tangent half-angle change of variables, we get the polynomials 
\begin{align}
V_{\theta_2}^{num^*}&=32 (-2 \mu_3 - \mu_4 - 6 \mu_1 r^2 + 4 \mu_3 r^2 + 15 \mu_4 r^2 - 4 \mu_1 r^4 + 
   6 \mu_3 r^4 - 15 \mu_4 r^4 + 2 \mu_1 r^6 + \mu_4 r^6) \label{kites-poly1}\\
V_{\theta_3}^{num^*}&= (\mu_2 - \mu_4) (-1 + 3 r^2) \label{kites-poly2}\\
V_{\theta_4}^{num*}&= 32 (-\mu_2 - 2 \mu_3 - 6 \mu_1 r^2 + 15 \mu_2 r^2 + 4 \mu_3 r^2 - 
   4 \mu_1 r^4 - 15 \mu_2 r^4 + 6 \mu_3 r^4 + 2 \mu_1 r^6 + \mu_2 r^6) \label{kites-poly3}
   \end{align}
   
   We can quickly see that $\mu_2=\mu_4$ and $r=\pm1/\sqrt{3}$ is a root of $V_{\theta_3}^{num^*}$, but it is not apparent that these are roots of the other two equations.
   
Since we are investigating the relation between circulation weights $\mu_i$ that would guarantee symmetry of relative equilibria, we project the ideal $\langle V_{\theta_2}^{num}\textsuperscript{*},V_{\theta_3}^{num}\textsuperscript{*}, V_{\theta_4}^{num}\textsuperscript{*}\rangle$ on to $(\mu_1,\mu_2,\mu_3,\mu_4)$-space. We specify an elimination ordering $r > \mu_1> \mu_2> \mu_3> \mu_4$ and use a Gr\"{o}bner basis to calculate the elimination ideal in \textit{grevlex} ordering that removes configuration variable $r$. We perform Gr\"{o}bner basis algorithm through Mathematica to obtain the following basis for the elimination ideal:
\begin{equation}
\{\mu_2-\mu_4\}
\end{equation}
Thus $\mu_2=\mu_4$. In this proof, we assumed the line of symmetry contained the vortices $v_1$ and $v_3$, thus the vortices not on the line must have equal circulation. The symmetry $V(\theta)$ means the result will be the same for any reordering of the vortices.  \hfill $\square$ \end{proof}

Next we ask the question, what actual angles $\theta_2$ correspond to solutions that are kite configurations, and how are they related to the ratio of the circulation parameters? 

\begin{example}\label{V_theta_3 root} 
In $V_{\theta_3}^{num^*}$, the root $r=\pm 1/\sqrt{3}$, corresponding to $\theta_2=\pm 2\pi/3$ is apparent. When substituted into $\nabla V$, the gradient becomes
$$\nabla V = \frac1{\sqrt{3}}(\mu_1(\mu_4-\mu_2),\mu_2(\mu_1-\mu_4),0,\mu_4(\mu_2-\mu_1))$$
Thus the solutions for $\mu_i\neq 0 $ are $\mu_2=\mu_4=\mu_1$. Checking for nondegeneracy of the critical point, we look at the Hessian $V_{\theta\theta}$. When substituting $\mu_2=\mu_1$, $\mu_4=\mu_1$ and the configuration $\theta_1=0, \theta_2=\pm 2\pi/3$, $\theta_3=\pi$, and $\theta_4=-\theta_2$ into $V_{\theta\theta}$, the eigenvalues are 
\begin{align*}
\lambda_1& = 0, \qquad \lambda_2= -\tfrac12 \mu_1 (\mu_1 - 3 \mu_3),\\
\lambda_{3,4}&= \tfrac14 \mu_1 \left(-\mu_1 + 6 \mu_3 \pm \sqrt{\mu_1^2 + 12 \mu_1 \mu_3 + 108\mu_3^2}\right)
   \end{align*}
Degenerate solutions occur when there is more one zero eigenvalue. We see $\lambda_2=0$ is zero when $\mu_1=3\mu_3$ and one of $\lambda_{3,4}$ vanishes $\mu_1=-3\mu_3$. So the solution is nondegenerate for any values of $\mu_1=\mu_2=\mu_4 \neq \pm 3\mu_3$.  

The linear stability of the relative equilibria for $\varepsilon>$ continued from this configuration is determined by the eigenvalues of the weighted Hessian $\mu^{-1}V_{\theta\theta}$. Using the same substitutions, we get 
\begin{align*}
\lambda_1&=0, \qquad \lambda_2=\tfrac12 (-\mu_1 + 3 \mu_3)\\
\lambda_{3,4}&=\tfrac18 \left(7 \mu_1 + 3 \mu_3 \pm \sqrt{121 \mu_1^2 + 282 \mu_1 \mu_3 + 81 \mu_3^2}\right)
\end{align*}
Using \texttt{Reduce} on the equations $\lambda_2>0$, $\lambda_3>0$, and $\lambda_4>0$ in Mathematica, we find that there are $N-1=3$ positive eigenvalues when $\mu_3>0$ and $\frac{3}{121} (-47 + 4 \sqrt{70}) \mu_3 \approx -0.335544 \mu_3 \leq \mu_1 \leq -\frac13\mu_3$. Thus there is a small range of ratios $\mu_1=\mu_2=\mu_4:\mu_3$ where kites at an angle of $\theta_2=2\pi/3$ are linearly stable. 
\end{example}

\begin{theorem}\label{thm:kite configs}
Assume $\mu_2=\mu_4$. Then there is at least one kite configuration such that $\theta_1=0$, $\theta_3=\pi$ and the vortices are ordered $v_1<v_2<v_3<v_4$ counter clockwise around the circle. Additionally, define the rational expressions:
\begin{align} 
a = &\,\,  \tfrac{1}{(2+\mu_2)}(6\mu_3-15\mu_2-4)\\
b = &\,\, \tfrac{1}{(2+\mu_2)}(4\mu_3+15\mu_2-6)\\
c= &\,\, \tfrac{1}{(2+\mu_2)}(-\mu_2-2\mu_3)\\
\Delta= &\,\, a^2b^2-4b^3-4a^3c+18abc-27c^2 
\end{align}

\noindent Then
\begin{enumerate}
 		\item There are three kite configurations for parameters sets that are scalar multiples of vectors $(1,\mu_2,\mu_3,\mu_2)$ where $\mu_2$ and $\mu_3$ satisfy 
		$\{(\mu_2,\mu_3): c<0, b>0, \Delta>0, ab-9c<0 \}$	
   		\item There are two kite configurations for parameters sets that are scalar multiples $(1,\mu_2,\mu_3,\mu_2)$ where $\mu_2$ and $\mu_3$ are in the union of the sets below:
     \begin{enumerate}
     \item $\{(\mu_2,\mu_3) : c> 0 \}$
     \item $\{(\mu_2,\mu_3) : c =0$ and $\mu_2<-2$ or $\mu_2>6/13 \}$
     \item $\{(\mu_2,\mu_3) : \mu_2 =-2$ and $-\tfrac{13}{3} < \mu_3<1  \}$
     \item $\{\mu_2,\mu_3): \Delta=0$ and $ab-9c < 0 \}$
    \end{enumerate}
\item There is only one kite configuration for any other parameter sets that are scalar multiplies of the vectors $(1,\mu_2,\mu_3,\mu_2)$.	
	\end{enumerate}
\end{theorem}

\begin{figure}[h]
\centering
\includegraphics[height=2.75in]{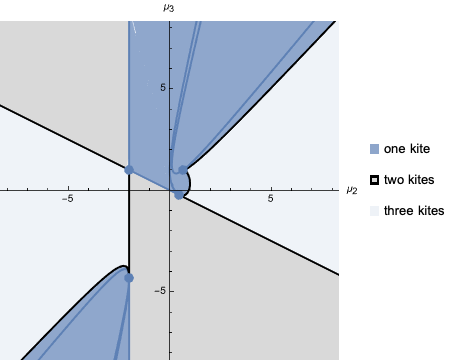}
\caption{The partition of the $\mu_1=1$ plane in $\mu_1\mu_2\mu_3$-parameter space for kite configurations. There is one kite configuration in the blue shaded regions, along the blue curves and at the blue dots. There are two kite configurations in the gray regions and along the gray curves. There are three kite configurations in the rest of the plane.}
\label{fig:kite-roots}
\end{figure}

\begin{figure}
\centering
\includegraphics[width=.9\textwidth]{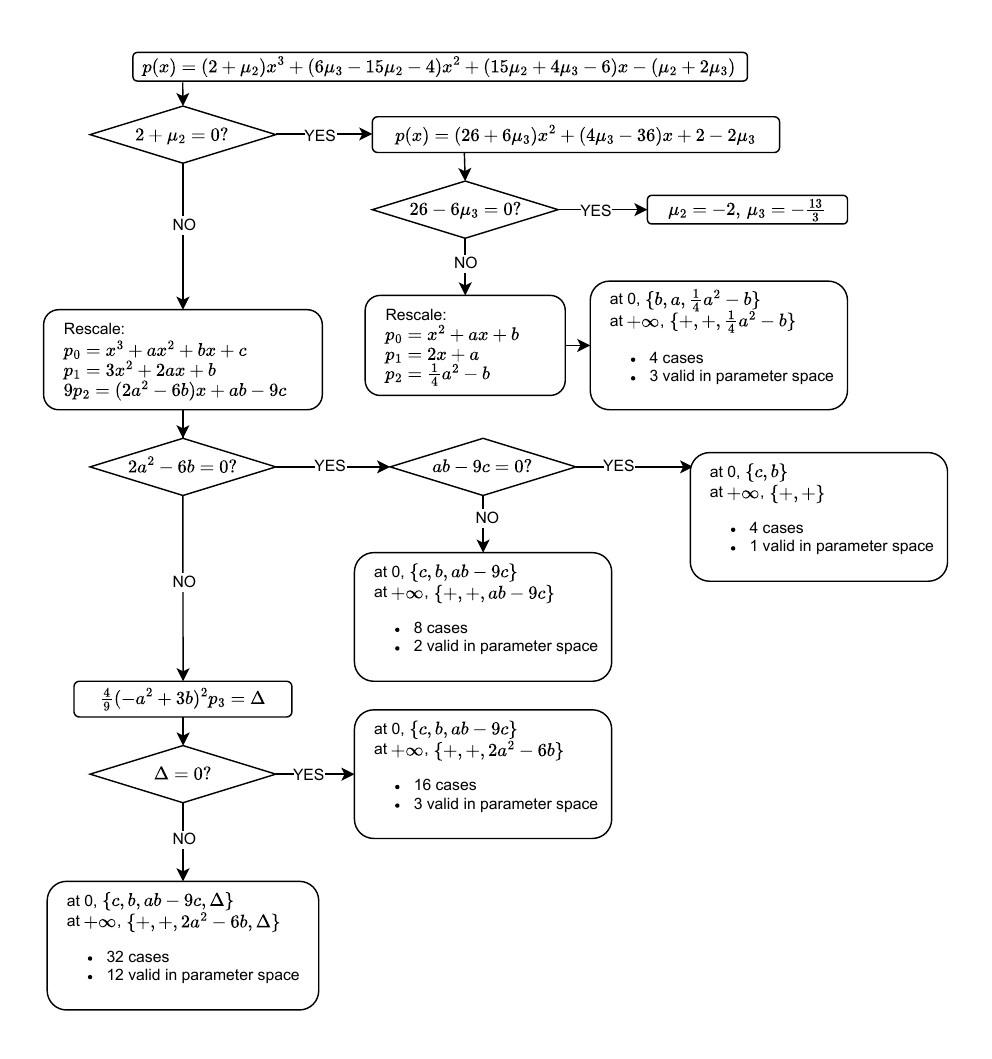}
\caption{The computation tree of the Sturm sequence in the proof of Theorem \ref{thm:kite configs}. }
\label{fig:kite-flowchart}
\end{figure}

\begin{proof}
In $V_{\theta_4}^{num*}$ \eqref{kites-poly3}, we see the factor \begin{equation}p(\mu_1,\mu_2,\mu_3,r)=-\mu_2 - 2 \mu_3 - 6 \mu_1 r^2 + 15 \mu_2 r^2 + 4 \mu_3 r^2 - 4 \mu_1 r^4 - 
 15 \mu_2 r^4 + 6 \mu_3 r^4 + 2 \mu_1 r^6 + \mu_2 r^6 \label{kitesconfig}
 \end{equation}
 This factor is the same the one in $V_{\theta_2}^{num*}$ \eqref{kites-poly1} with the substitution $\mu_4=\mu_2$. Roots of $p$ where $r>0$ correspond to the ordering of vortices $v_1<v_2<v_3<v_4$. 
 Change variables $x=r^2$ in $p$ so that we get the polynomial 
\begin{equation}
p(x) = (2\mu_1+\mu_2)x^3 + (6\mu_3-4\mu_1-15\mu_2)x^2+(4\mu_3+15\mu_2-6\mu_1)x-\mu_2-2\mu_3 \label{eq:original p}
\end{equation}
with three parameters. Since $p(x)$ is an odd polynomial, it has at least one real root, so there is always at least one kite configuration. Also since $p$ is homogeneous $p(k\mu_1,k\mu_2,k\mu_3,x)=kp(\mu_1,\mu_2,\mu_3,x)$, we can choose a scaling with $\mu_1=1$. 

Then we have $p(x)=(2+\mu_2)x^3 + (6\mu_3-4-15\mu_2)x^2+(4\mu_3+15\mu_2-6)x-\mu_2-2\mu_3$.

We proceed to count roots of $p(x)$ in the interval $(0,\infty)$. (We exclude roots at $x=r=0$ where $\theta_2=\pi$ and $v_2$ collides with $v_3$.) We use the algorithm described in section 1.3.2 of \cite{Coste} in order to count the positive roots of $p(x)$ for different cases of the parameters. Each step in the algorithm considers if the leading terms of the polynomials in the Sturm sequence are zero or not. A computation tree for the algorithm is depicted in Figure \ref{fig:kite-flowchart}.

We first consider the case where the leading coefficient of $p(x)$ is zero, i.e. when $\mu_2=-2$. Then $p(x)= (26+6\mu_3) x^2 +(4\mu_3-36)x+ 2 - 2 \mu_3$. 
If the new leading coefficient of $p(x)$ is also zero, i.e. $\mu_3=-\frac{13}3$, then $p(x)=\frac13(32- 160 x)$ and has one root, $x=\frac15$. This is marked by a blue dot at $(-2,-\tfrac{13}{3})$ in Figure \ref{fig:kite-roots}.

If $\mu_3\neq -\frac{13}3$, then we rescale $p(x)$ to get $p_0(x)=x^2+ax+b$ where $a=\frac{(4\mu_3-36)}{(26+6\mu_3)}$ and $b=\frac{2-2\mu_3}{(26+6\mu_3)}$. We create a Sturm sequence 
\begin{align}
	p_0(x) & =x^2+ax+b\\
	p_1(x) & = p_0'(x) = 2x+a\\
	p_2(x) & = -\text{Rem}(p_0,p_1) = \frac14 a^2 -b
\end{align}
Note that $p_2(x)=\frac14a^2-b = \frac{4 (17 - 2 \mu_3 + \mu_3^2)}{(13 + 3 \mu_3)^2}$. Since the discriminant of $17-2\mu_3+\mu_3^2$ is negative, and so $p_2(x)$ is always positive. At $x=0$, the Sturm sequence is $\{b,a,+\}$ and at $x=+\infty$, the Sturm sequence is $\{+,+,+\}$ with $v(+\infty)=0$. We get four cases based on the signs of $b$ and $a$. Using the \texttt{Reduce} command in Mathematica, we find only three of these cases are valid in parameter space. These cases are summarized in Table \ref{table:strum mu2=-2}. Along with the point $(-2,-\tfrac{13}{3})$, these cases completely partition the line $\mu_2=-2$ as shown in Figure \ref{fig:kite-roots}.

\begin{table}[h]
\centering
\hspace*{-.25in}
\caption{Summary of cases in the Sturm sequence when $\mu_2=-2$ and $\mu_3 \neq -\tfrac{13}{3}$. \vspace*{11pt}}
    \begin{tabular}{p{.65in}|p{.8in}|p{.9in}|c|p{2in}}
     Assumptions & \multicolumn{2}{c|}{Sturm Sequence} & $v(0)-v(+\infty)$  & Description  \\ \cline{2-3}
     & at $x=0$ & at $x=+\infty$ & &   \\ \hline
    \hline
    $b < 0$, $a > 0$ & $\{b,a,\tfrac14a^2-b\}$ \newline $=\{-,+,+\}$  & $\{+,+,\tfrac14a^2-b\}$ \newline $=\{+,+,+\}$ & $1 -0 = 1$ & $\mu_3< -\tfrac{13}{3} $ and $ \mu_3 \geq 9$\\ \hline
    $b\leq 0$, $a \leq 0$ & $\{b,a,\tfrac14a^2-b\}$ \newline $=\{-,-,+\}$\newline or $\{0,-,+\}$ \newline or $\{-,0,+\}$ & $\{+,+,\tfrac14a^2-b\}$ \newline $=\{+,+,+\}$ & $1 -0 = 1$ & $1 \leq \mu_3 \leq 9$\\ \hline
    $b>0$, $a<0$ & $\{b,a,\tfrac14a^2-b\}$ \newline $=\{+,-,+\}$ & $\{+,+,\tfrac14a^2-b\}$\newline $=\{+,+,+\}$ & $2 -0 = 2$ & $-\tfrac{13}{3} < \mu_3 < 1$\\ \hline
    $b>0$, $a>0$ & $\{b,a,\tfrac14a^2-b\}$ \newline $=\{+,+,+\}$ & $\{+,+,\tfrac14a^2-b\}$ \newline $=\{+,+,+\}$ & $0-0=0$ & Not valid in parameter space \\ \hline 
    \end{tabular}
\label{table:strum mu2=-2}
\end{table}

Next we consider the case where $\mu_2\neq -2$. We rescale $p(x)$ in \eqref{eq:original p} to have a leading coefficient of 1 and define the following rational expressions in the parameters $\mu_2,\mu_3$:
\begin{align}
a & =  \tfrac{1}{(2+\mu_2)}(6\mu_3-15\mu_2-4)\\
b & = \tfrac{1}{(2+\mu_2)}(4\mu_3+15\mu_2-6)\\
c & = \tfrac{1}{(2+\mu_2)}(-\mu_2-2\mu_3)
\end{align}
Then the Sturm sequence is
\begin{align}
p_0(x)&=x^3+ax^2+bx+c\\
p_1(x)& = 3x^2+2ax+b\\
9p_2(x)&=9(-\text{Rem}(p_0,p_1))=(2a^2-6b)x+ab-9c\\
p_3(x)& = -\text{Rem}(p_1,p_2) =\frac{\Delta}{\frac{4}{9}(-a^2+3b^2)^2}=\frac{a^2b^2-4b^3-4b^3-4a^3c+18abc-27c^2}{\frac{4}{9}(-a^2+3b^2)^2}
\end{align} 

Since the signs of $9p_2$ and of $\Delta$ are the same as $p_2$ and $p_3(x)$, respectively, so we use $9p_2$ and $\Delta$ in our Sturm sequences for simplicity. We use a similar process as above of considering when each new leading coefficient is zero or nonzero, as shown in the computation tree in Figure \ref{fig:kite-flowchart}. 

Since the leading terms of $p_0$ and $p_1$ are constant, we consider if the leading term of $9p_2$ is zero or not. We start by assuming $2a^2-6b=0$ and $ab-9c=0$, so that the Sturm sequence $\{p_0,p_1\}$. Evaluated at $x=0$, the sequence is $\{c,b\}$, and at $x=+\infty$, the sequence is $\{+,+\}$. There are four possible subcases for the Sturm sequence at $x=0$: $\{+,+\}$, $\{+,-\}$, $\{-,+\}$ and $\{+,+\}$. Using the \texttt{Reduce} command in Mathematica, the case where $2a^2-6b=0$, $ab-9c=0$, $c<0$ and $b>0$ was found to be the only case that is valid in parameter space. In fact the hyperbola $2a^2-6b=0$ is contained in the region where $c<0$ and $b>0$. Its intersection with the hyperbola $ab-9c=0$ is one point: $\mu_2=\tfrac23$, $\mu_3=1$. This is case 1 in Table \ref{table:sturm first part}.

We next consider the case where $2a^2-6b=0$ and $ab-9c\neq 0$, so that $9p_2 = ab-9c$. Then the Sturm sequence is $\{p_0,p_1,ab-9c\}$. Evaluated at $x=0$, this is $\{c,b,ab-9c\}$ and at $x=+\infty$, this is $\{+,+,ab-9c\}$. There are 8 possible cases for the Sturm sequences. Using \texttt{Reduce} in Mathematica, only two are found to be valid in parameter space. These are summarized in rows 2 and 3 of Table \ref{table:sturm first part}.

We continue this process assuming $2a^2-6b \neq 0$ and calculate the Sturm sequence when $\Delta =0$ and when $\Delta \neq 0$ as shown in the flow chart in Figure \ref{fig:kite-flowchart}. Cases that are valid in parameter space are summarized in Table \ref{table:sturm first part} for $\Delta =0$ and in Table  \ref{table:sturm cases} for $\Delta \neq 0$.

Overall, we find one case where there are three positive roots of $p(x)$, giving three unique kite configurations. This region is defined by $c<0$, $b>0$, $ab-9c<0$, $\Delta>0$. In parameter space it is in two disjoint regions (see Figure \ref{fig:kite-roots}), bounded by $\mu_3 = -\tfrac12\mu_2$, $\mu_2=-2$ and $\Delta =0$ where $ab-9c < 0$, but not including the boundary. There are two positive roots of $p(x)$, giving two unique kite configurations, along the boundary of the three kite region, as well as in the region defined by $c>0$. In parameter space this is two disjoint regions where $\mu_3\geq -\tfrac12\mu_2$ for $\mu_2<-2$ and $\mu_3<-\tfrac12\mu_2$ for $\mu_2>-2$. In the rest of the plane, there is only one positive root of $p(x)$, giving one unique kite configuration. This includes entirety of the hyperbola given by $2a^2-6b=0$, the curve $\Delta=0$ when $ab-9c>0$, the line $\mu_2=-2$ for $\mu_3 \geq 1$ and for $\mu_3 \leq -\tfrac{13}{3}$, and when $\mu_3=-\tfrac12\mu_2$ for $-2\leq \mu_2 \leq \tfrac{6}{13}$.

 This completely partitions the $\mu_1=1$ plane in $\mu_1\mu_2\mu_3$-space, as shown in Figure \ref{fig:kite-roots}.  
\hfill $\square$
\end{proof}

\begin{table}[h]
\centering
\hspace*{-.25in}
\caption{Sturm cases when $\mu_2\neq -2$ and along the curves $2a^2-6b=0$ and $\Delta=0$. The sections of the table correspond to yes or no answers in the flow chart in Figure \ref{fig:kite-flowchart}.  Cases for signs in the Sturm sequence that are not valid in parameter space are omitted from the table. \vspace*{11pt}}
\begin{tabular}{l|p{.65in}|p{.8in}|p{.9in}|c|p{2in}}
 & Assumptions & \multicolumn{2}{c|}{Sturm Sequence} & $v(0)-v(+\infty)$  & Description  \\ \cline{3-4}
& & at $x=0$ & at $x=+\infty$ & &   \\ \hline
\multicolumn{6}{c}{Case where $2a^2-6b=0$ and $ab-9c=0$} \\
\hline
1& $2a^2-6b=0$
\newline $ab-9c=0$
& $\{c,b\}=\{-,+\}$  & $\{+,+\}$& 1 & The hyperbola $2a^2-6b=0$ is contained in the region where $c<0 $ and $b>0$. The intersection of $2a^2-6b=0$ and $ab-9c=0$ is one point where $\mu_2=\tfrac23$ and $\mu_3=1$. The curve $\Delta=0$ also contains this point.
  \\
 \hline \multicolumn{6}{c}{Cases where $2a^2-6b=0$ and $ab-9c \neq 0$}\\\hline
2 & $2a^2-6b=0$ \newline
$ab-9c>0$ \newline 
& $\{c,b,ab-9c\} = \{-,+,+\}$ & $\{+,+,+\}$ \newline \vspace*{.75in} & 1
& \multirow{2}{2in}{The hyperbola $2a^2-6b=0$ is contained in the region where $c<0 $ and $b>0$. Valid subcases divide $2a^2-6b=0$ into regions where $ab-9c$ is positive or negative. 
\includegraphics[width=2in]{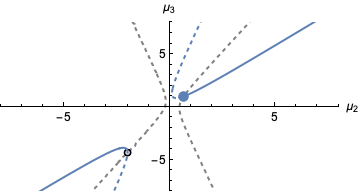} 
\includegraphics[width=1.5in]{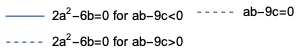}}  \\\cline{1-5} 
3 & $2a^2-6b=0$ \newline
$ab-9c<0$ \newline & $\{c,b,ab-9c\} = \{-,+,-\}$ & $\{+,+,-\}$ \newline \vspace*{.75in} & 1 &  \\ \hline 
\multicolumn{6}{c}{Cases where $\Delta=0$ and $2a^2-6b\neq 0$} \\ \hline
4 & $\Delta =0$ \newline $c<0$,
$b>0$, \newline $ab-9c<0$
& $\{c,b,ab-9c\} =$ \newline$ \{-,+,-\}$ & $\{+,+,2a^2-6b\}$\newline $=\{+,+,+\}$ \newline \vspace*{.4in} & 2 & 
\multirow{3}{2in}{The curve $\Delta=0$ is defined in the region where $c<0$ and where $2a^2-6b>0$. Subcases partition the curve $\Delta =0$ where $ab-9c$ is positive or negative or zero. The point at the intersection of the $\Delta =0$ and $ab-9c=0$ is included in case 5. 
\includegraphics[width=2in]{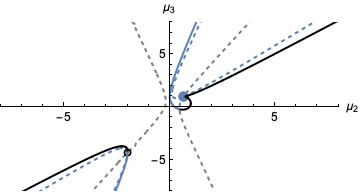} 
\includegraphics[width=1.5in]{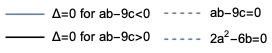}}   \\ \cline{1-5}
5 & $\Delta =0$ \newline $c<0$,
$b>0$, \newline$ab-9c\geq 0$ & $\{c,b,ab-9c\}$ \newline$ = \{-,+,+\}$ \newline or $\{-,+,0\}$ &$\{+,+,2a^2-6b\}$\newline $=\{+,+,+\}$ \newline \vspace*{.4in} & 1 &  \\ \cline{1-5} 
6 & $\Delta =0$ \newline $c<0$, 
$b<0$, \newline$ab-9c>0$ & $\{c,b,ab-9c\} = \{-,-,+\}$ & $\{+,+,2a^2-6b\}$\newline $=\{+,+,+\}$\newline \vspace*{.35in} & 1 &  \\ \hline
\end{tabular}
\label{table:sturm first part}
\end{table}

\begin{table}[h]
\centering
\caption{Summary of cases in the Sturm sequence when $\mu_2\neq-2$ and $\Delta\neq 0$.  Cases for signs in the Sturm sequence that are not valid in parameter space are omitted from the table. }
\label{table:sturm cases}
\begin{tabular}{l|p{.65in}|p{.8in}|p{.9in}|c|p{2in}}
 & Assumptions & \multicolumn{2}{c|}{Sturm Sequence} & $v(0)-v(+\infty)$  & Description  \\ \cline{3-4}
& & at $x=0$ & at $x=+\infty$ & &   \\ \hline
\multicolumn{6}{c}{Cases where $\Delta \neq 0$ and $2a^2-6b\neq 0$} \\\hline
8 & $c<0$, $b>0$, $ab-9c<0$, $\Delta>0$, \newline $2a^2-6b>0$ & $\{c,b,ab-9c,\Delta\} $ $ =\{-,+,-,+\}$ & $\{+,+,2a^2\!-\!6b,\Delta\}$ $=\{+,+,+,+\}$ & 3 & These assumptions result in the maximum number of sign changes for $v(0)$ and the minimum number for $v(+\infty)$ 
\includegraphics[width=2in]{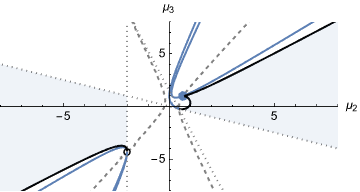} 
\includegraphics[width=1.5in]{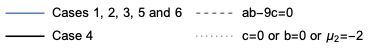}\\ \hline \hline
9 & $c\geq 0$, $b\geq0$, $ab-9c<0$ & $\{c,b,ab\!-\!9c,\Delta\}$ \newline $= \{+,+,-,+\}$ or $\{+,0,-,+\}$ or $\{0,+,-,+\}$ \newline \vspace*{.1in}& $\{+,+,2a^2\!-\!6b,\Delta\}$ $=\{+,+,+,+\}$  & 2 & \multirow{3}{2in}{When $c\geq 0$, $2a^2-6b>0$ and $\Delta\geq 0$. Subcases divide this region where $b$ and $ab-9c$ are positive, negative, or zero.
\includegraphics[width=2in]{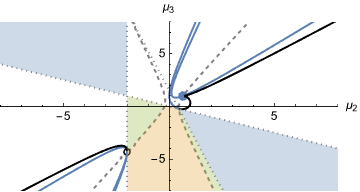}
\includegraphics[width=1in]{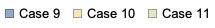}}  \\ \cline{1-5}
10 & $c>0$, $b<0$, $ab-9c\geq 0$ & $\{+,-,+,+\}$ or $\{+,-,0,+\}$ & $\{+,+,+,+\}$ \newline \vspace*{.25in} & 2 &  \\ \cline{1-5} 
11 & $c>0$, $b<0$, $ab-9c\leq0$ & $\{+,-,-,+\}$ or $\{+,-,0,+\}$ & $\{+,+,+,+\}$ \newline \vspace*{.25in} & 2 &  \\ \hline \hline
12 
& $c<0, b\geq 0$,  $\Delta<0$, \newline 
$ 2a^2-6b >0$,  $ab-9c \geq 0$& $\{c,b, ab-9c, \Delta\}$ $=\{-,+,+,-\}$ or $\{-,0,+,-\}$ or $\{-,+,0,-\}$ 
& $\{+,+,2a^2\!-\!6b,\Delta\}$ $=\{+,+,+,-\}$ & 1 & \multirow{5}{2in}{Partitions of the region where $\Delta <0$. Cases contain their boundaries where $b=0$ or $ab-9c=0$. \includegraphics[width=2in]{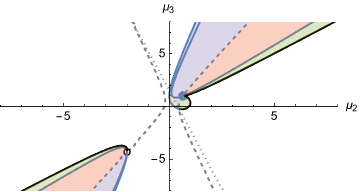}
\includegraphics[width=1.4in]{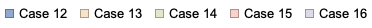} } \\ \cline{1-5}
13 
& $c<0$, $b\leq0$, $\Delta >0$, \newline $2a^2-6b>0$, $ab-9c \geq 0$ & $\{c,b, ab-9c, \Delta\}$ $=\{-,-,+,-\}$ or $\{0,-,+,-\}$ or $\{-,0,+,-\}$ & $\{+,+,2a^2\!-\!6b,\Delta\}$ $=\{+,+,+,-\}$ & 1 & 
\\ \cline{1-5}
14 
& $c<0$, $b>0$, $\Delta<0$ \newline $2a^2-6b>0$, $ab-9c \leq 0$  & $\{c,b, ab-9c, \Delta\}$ $=\{-,+,-,-\}$ or $\{-,+,0,-\}$ & $\{+,+,2a^2\!-\!6b,\Delta\}$ $=\{+,+,+,-\}$ & 1 & \\ \cline{1-5}
15 
&$c<0$, $b>0$, $\Delta<0$ \newline $2a^2-6b<0$, $ab-9c \leq 0$ & $\{c,b, ab-9c, \Delta\}$ $=\{-,+,-,-\}$ or $\{-,+,0,-\}$ & $\{+,+,2a^2\!-\!6b,\Delta\}$ $=\{+,+,-,-\}$ & $1$  & \\ \cline{1-5} 
16 
& $c<0$, $b>0$, \newline$\Delta<0$,\newline  $2a^2-6b<0$,\newline $ab-9c\geq0$ & $\{c,b, ab-9c, \Delta\}$ $=\{-,+,+,-\}$ or
$\{-,+,0,-\}$ & $\{+,+,2a^2\!-\!6b,\Delta\}$ $=\{ +,+,-,-\}$ & 1 &  \\ \hline
17 
& $c\leq 0$, $b\leq0$, \newline$\Delta>0$,\newline  $2a^2-6b>0$,\newline $ab-9c\geq0$& $\{c,b, ab-9c, \Delta\}$ $=\{-,-,+,+\}$ or $\{-,0,+,+\}$ or $\{0,-,+,+\}$ \vspace*{.1in} & $\{+,+,2a^2\!-\!6b,\Delta\}$ $=\{+,+,+,+\}$  
 & 1 & \multirow{5}{2in}{Three regions where $\Delta >0$ and $c\leq 0$. Cases contain their boundaries where $c=0$, $b=0$, or $ab-9c=0$.
\includegraphics[width=2in]{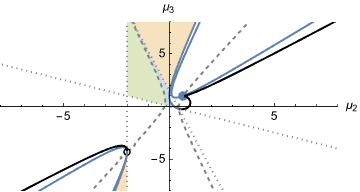}
\includegraphics[width=1.1in]{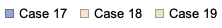} } \\ \cline{1-5}
18 
& $c<0, b\geq 0$,  $\Delta>0$, \newline $ 2a^2-6b >0$, \newline $ab-9c > 0$ & $\{c,b, ab-9c, \Delta\}$  $=\{-,+,+,+\}$  or $\{-,0,+,+\}$ 
& $\{+,+,2a^2\!-\!6b,\Delta\}$ $=\{+,+,+,+\}$ & 1 & \\ \cline{1-5}
19 
& $c\leq 0, b < 0$,  $\Delta>0$, \newline $ 2a^2-6b >0$, \newline $ab-9c \leq 0$ & $\{c,b, ab-9c, \Delta\}$ $=\{ -,-,-,+\}$ or $\{0,-,-,+\}$ &$\{+,+,2a^2\!-\!6b,\Delta\}$ $=\{+,+,+,+\}$ & 1 & \\ \hline
\end{tabular}
\end{table}

\begin{corollary}
 \label{thm: kites-squares}
When parameters are in two equal pairs, one kite configuration is a square.  
 \end{corollary}

The proof of the corollary is simple: $x=1$, corresponding to $\theta_2=\tfrac{\pi}{2}$ is a root of $p(x)$ in Equation \eqref{eq:original p},  if and only if $\mu_1=\mu_3$. From this we see that there are symmetric kite configurations with two equal pairs of circulation parameters that are not squares, for example, where the line $\mu_3=1$ intersects regions where there are two or three kite configurations in Figure \ref{fig:kite-roots}.

\FloatBarrier
\subsection{Numerical Results: Linear Stability of Kite Configurations}

We now determine which parameter values give degenerate kite-shaped critical points of $V(\theta)$ and the linear stability of the corresponding relative equilibria. In $\nabla V(\theta)$ and the Hessian of $V(\theta)$, we make the following substitutions: $\theta_1=0$, $\theta_3=\pi$, $\theta_4=-\theta_2$, and $\mu_4=\mu_2$. For consistency with the previous section we set $\mu_1=1$.

The characteristic polynomial of the Hessian is a quartic polynomial of the form $\lambda^4+a\lambda^3+b\lambda+c$ where the coefficients $a$, $b$, and $c$ are defined in terms of $\mu_2$, $\mu_3$ and $\theta_2$. For two (or more) zero eigenvalues, $c$ must also be zero. We use the \texttt{Coefficient} command in Mathematica to define $c$ in terms of $\mu_2$, $\mu_3$, and $\theta_2$, and then use \texttt{Solve} on the system of equations $\nabla V=0$ and $c=0$. This gives solutions for $\mu_2$ and $\mu_3$ parameterized by $\theta_2$. We find degenerate critical points when $\mu_2=0$ and $\mu_3=0$ and along the orange curves in Figure \ref{fig:kites-degen}.

\begin{figure}[h]
\centering
\includegraphics[height=2.75in]{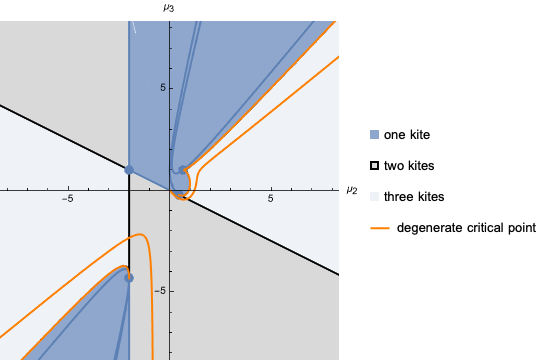}
\caption{Parameter values in the $\mu_1=1$ subset that have a degenerate kite critical point. Degeneracy is preserved by scaling.}
\label{fig:kites-degen}
\end{figure}

Next we consider which kite critical points of $V$ correspond linearly stable relative equilibria. With $\mu_1=1$, we need to examine the $\mu_2\mu_3$-plane, but choose to focus on the square $[-8,8]\times [-8,8]$. This region captures smaller structures near the origin, while showing the larger regions we expect to continue in the rest of the plan.

We sample uniformly at random 15,000 points from the set $[-8,8]\times [-8,8]$. At each point, we numerically solve for the kite-shaped critical points of $V$. We count the number of unique roots with $0<\theta_2<\pi$. The results of this calculation are plotted in Figure \ref{fig:kites-roots-num} and confirm the algebraically determined regions from Theorem \ref{thm:kite configs} as shown in Figure \ref{fig:kite-roots}.

\begin{figure}[h]
\centering
\includegraphics[height=2.75in]{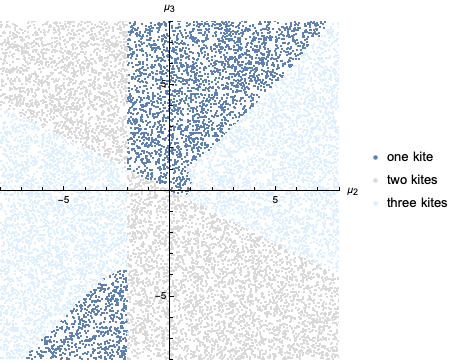}
\caption{Number of kite configurations calculated numerically counts for 15,000 points sampled uniformly at random from the set $[-8,8]\times[-8,8]$. Compare to Figure \ref{fig:kite-roots}.}
\label{fig:kites-roots-num}
\end{figure}

We then calculate the eigenvalues of the weighted Hessian for each root at each point sampled and determine if any have three positive eigenvalues or three negative eigenvalues. We sampled an additional 5,000 points in the set $[-1,-0.5]\times[-8,-2]$ to get more details of the structure. These results are plotted in Figure \ref{fig:kites-linstab}. Parameter sets for critical points with three positive eigenvalues correspond to linearly stable relative equilibria. Multiplication of the parameter set by a positive scalar preserves the stability of the critical point. Parameter sets for critical points with three negative eigenvalues correspond to linearly stable relative equilibria when the parameter set is multiplied by a negative scalar. 

Finally, we consider the angle $\theta_2$ corresponding to critical points of $V$ where the weighted Hessian has either three positive or three negative eigenvalues. These are shown in Figure \ref{fig:kites-angles}.

\begin{figure}[h]
\centering
\includegraphics[height=2.75in]{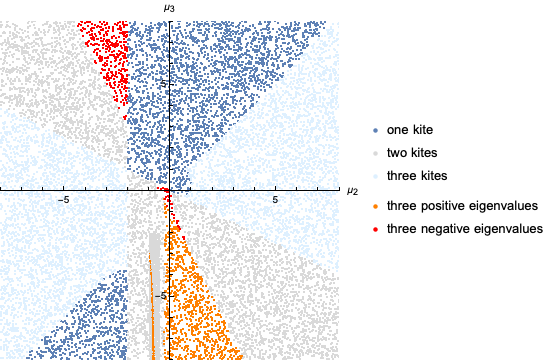}
\caption{Parameter values where the weighted Hessian evaluated at a kite configuration has either three positive eigenvalues or three negative eigenvalues for 15,000 points sampled uniformly at random from the set $[-8,8]\times[-8,8]$ and an additional 5,000 points sampled uniformly at random from the set $[-1,-0.5]\times[-8,-2]$.}
\label{fig:kites-linstab}
\end{figure}
\begin{figure}[h]
\centering
\includegraphics[height=2in]{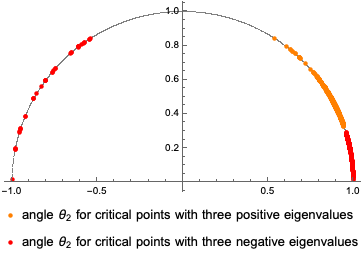}
\caption{Angles for $\theta_2$ associated with critical points of $V$ for kite configurations where the weighted Hessian has either three positive or three negative eigenvalues.}
\label{fig:kites-angles}
\end{figure}

\FloatBarrier

\section{Conclusion}

We found necessary conditions for ratios of circulation parameters for specific types of symmetric configurations in the $(1+N)$-vortex problem with $N=4$. Of the most interest is that the condition of two equal pairs of vortices is necessary for the the $(1+4)$-gon configuration, though it is not sufficient as there are also kite configurations with two equal pairs of vortices. Additionally, there exists only one possible trapezoid with three equal sides, and it does not exist for a set of equal circulation parameters. This shows that the configuration given in Figure 4c of \cite{barry2012relative} does not have equally spaces vortices. Assuming the vortices are ordered $v_1<v_2<v_3<v_4$ counter clockwise around the circle, they inscribe an isosceles trapezoid with interior angles $\theta_4-\theta_3 = \theta_2-\theta_1 \approx 0.66094$ ($37.8691^{\circ}$) and $\theta_3-\theta_2 \approx 0.58762$ ($33.6681^{\circ}$). It is interesting to compare this result with those from the $(1+4)$-body problem where the isosceles trapezoid with four identical infinitesimal masses is estimated to have interior angles of $41.5^{\circ}$ and $37.4^{\circ}$ \cite{albouy2009relative}. Furthermore, the configurations found in Theorem \ref{thm:3equalsides} and pictured in Figures \ref{threeequal_ex2} and \ref{threeequal_ex3} are examples of isosceles trapezoids for unequal circulation parameters, a type of symmetry that could be a point of future inquiry.


\subsection*{CRediT authorship contribution statement}
\textbf{Sophie Le:} Conceptualization, Methodology. 
\textbf{Alanna Hoyer-Leitzel:} Conceptualization, Methodology, Validation, Writing - Original Draft, Writing - Review \& Editing.

\subsection*{Acknowledgements} 
Our deepest appreciation to many anonymous reviewers for helpful feedback, particularly insight into which monomial ordering to use for the Gr\"{o}bner basis calculation in the proof of Theorem \ref{thm:3equalsides}.
A.H-L. thanks Sarah Iams, Heidi Goodson, Annie Raymond, Jessica Sidman and Ashley Wheeler for moments of advice and insight as this paper was developed. A.H-L. is supported by the Kennedy-Schuklenoff endowed chair at Mount Holyoke College, and a portion of this work was completed during A.H-L.'s sabbatical which was partially supported by the Hutchcroft Fund and the Mathematics and Statistics Department at Mount Holyoke College. S.L. completed this work while at student at Mount Holyoke College and received a software fellowship from the Hutchcroft Fund.

\newpage
\bibliography{vortex-bib}{}
\bibliographystyle{siam}


\end{document}